\DeclareMathAlphabet{\mathbf}{T1}{ppl}{bx}{n}
\DeclareMathAlphabet{\mathrm}{T1}{ppl}{m}{n}
\numberwithin{equation}{section}
\newcommand\note[1]%
\def\({\left(}
\def\){\right)}
\def\<{\left<}
\def\>{\right>}
\newtheorem{theorem}{Theorem}[section]
\newtheorem{proposition}[theorem]{Proposition}
\newtheorem{lemma}[theorem]{Lemma}
\newtheorem{definition}[theorem]{Definition}
\newtheorem{corollary}[theorem]{Corollary}
\theoremstyle{definition}
\newtheorem{example}[theorem]{Example}
\newtheorem{remark}[theorem]{Remark}
\newcommand\bb[1]{{\text{\bf#1}}}
\newcommand\Z{\bb{Z}}
\newcommand\R{\mathbb{R}}
\newcommand     {\comment}[1]   {}
\newcommand{\mute}[2] {}
\newcommand     {\printname}[1] {}
\newcommand\funclim[1]{\operatorname*{\mathrm{#1}}}
\renewcommand\lim{\funclim{lim}}
\newcommand\sur{\mathrel{\to\kern-1.8ex\to}}
\newcommand\iso{\mathrel{\hookrightarrow\kern-1.8ex\to}}
\newcommand\longhookrightarrow{\lhook\joinrel\longrightarrow}
\newcommand\longsur{\mathrel{\longrightarrow\kern-1.8ex\to}}
\newcommand\longiso{\mathrel{\longhookrightarrow\kern-1.8ex\to}}
\begin{document}

\bibliographystyle{amsalpha}
\date{\today}

\title{Lefschetz contact manifolds and odd dimensional symplectic geometry}

\author{Yi Lin }

 \date{\today}
\begin{abstract} In the literature, there are two different versions of Hard Lefschetz theorems for a compact Sasakian manifold. The first version, due to Kacimi-Alaoui, asserts that the basic cohomology groups of a compact Sasakian manifold satisfies the transverse Lefschetz property. The second version, established far more recently by Cappelletti-Montano, De Nicola, and Yudin, holds for the De Rham cohomology groups of a compact Sasakian manifold. In the current paper, using the formalism of odd dimensional symplectic geometry, we prove a Hard Lefschetz theorem for compact $K$-contact manifolds, which implies immediately that the two existing versions of Hard Lefschetz theorems are mathematically equivalent to each other.

 Our method sheds new light on the Hard Lefschetz property of a Sasakian manifold. It enables us to  give a simple construction of simply-connected $K$-contact manifolds without any Sasakian structures in any dimension $\geq 9$, and answer an open question asked by Boyer and late Galicki concerning the existence of such examples. 


\end{abstract}
\maketitle



\setcounter{section}{0} \setcounter{subsection}{0}

\section{Introduction}

 Conceptually, Sasakian geometry can be thought as an odd-dimensional counterpart of K\"ahler geometry. It is naturally related to two K\"ahler geometries. On the one hand, the metric cones of Sasakian  manifolds must be K\"ahler. On the other hand, for any Sasakian manifold, the one dimensional foliation defined by the Characteristic Reeb vector field is transversally K\"ahler.

 In K\"ahler geometry, the Hard Lefschetz theorem is a remarkable classical result which has many important applications. In Sasakian geometry, it has been long known that the basic cohomology groups of any compact Sasakian manifold satisfy the Hard Lefschetz property. This result was due to El Kacimi-Alaoui \cite{ka90}, who derived it as an immediate consequence of the basic Hodge theory on basic forms. However, more recently, Cappelletti-Montano, De Nicola, and Yudin \cite{CNY13} also proved a Hard Lefschetz theorem which holds for the De Rham cohomology groups of a compact Sasakian manifold. Now that there are two different versions of the Hard Lefschetz theorem for a compact Sasakian manifold in the literature, one naturally wonder how they are conceptually related to each other.

%

 In K\"ahler geometry, taking cup product with the cohomology class of a given k\"ahler $2$-form naturally gives rise to Lefschetz maps between cohomology groups. In contrast, in Sasakian geometry, it is not obvious at all how to define Lefschetz maps appropriately. In \cite{CNY13}, the authors first proved the existence of Lefschetz maps by examining the spectral properties of the Laplacian associated to a compatible Sasakian metric very carefully, and then verified that these maps are independent of the choices of compatible Sasakian metrics.

More precisely, let $(M,\eta,g)$ be a $2n+1$ dimensional compact Sasakian manifold with a contact one form $\eta$. It is shown in \cite{CNY13} that for any $0\leq k\leq n$, the map
 \[ Lef_k: \Omega^{k}(M)\rightarrow \Omega^{2n+1-k}(M),\,\,\alpha\mapsto \eta\wedge (d\eta)^{n-k}\wedge \alpha\]
 sends harmonic forms to harmonic forms, and therefore induces a map
 \begin{equation}\label{Lef-map} Lef_k: H^{k}(M,\R)\rightarrow H^{2n+1-k}(M,\R)\end{equation}
 at the cohomology level.


In a different direction, Zhenqi He studied in his Ph.D thesis \cite{He10} the geometry of odd dimensional symplectic manifolds, which include contact manifolds as special examples. On any odd dimensional symplectic manifold, there is also a notion of a canonical Reeb vector field. When the odd dimensional symplectic structure is induced by a contact structure, the notion of a Reeb vector field in odd dimensional  symplectic geometry agrees with the usual one in contact geometry. Among other things, He developed symplectic Hodge theory on an odd dimensional symplectic manifold, which applies to the basic cohomology with respect to the Reeb vector field.

Inspired by the recent work of \cite{CNY13}, the author initiate in this paper a symplectic hodge theoretic approach to  the Hard Lefschetz theorem of Sasakian manifolds, and more generally, the Hard Lefschetz theorem of $K$-contact manifolds. Throughout this paper, a $K$-contact manifold $(M,\eta)$ is said to satisfy the {\bf transverse Hard Lefschetz property} if its basic cohomology (with respect to the canonical Reeb vector field induced by $\eta$) satisfies the Hard Lefschetz property. Among other things, we proved in this paper that a compact $K$-contact manifold satisfies the Hard Lefschetz property in the sense of \cite{CNY13},  if and only if it satisfies the transverse Hard Lefschetz property. As an immediate consequence, it implies that for a compact Sasakian manifold, the two different versions of Hard Lefschetz theorems  in the literature are mathematically equivalent to each other.

Our approach involves two important pieces of technology: a long exact sequence \cite[Sec.7.2]{BG08} which relates the basic cohomology to the De Rham cohomology of the $K$-contact manifold itself, and the odd dimensional symplectic Hodge theory developed in \cite{He10}. In this approach, the symplectic  Hodge theory replaces the role of the Riemannian Hodge theory used in \cite{CNY13}, and allows us to show that when the transverse Hard Lefschetz property holds for a compact $K$-contact manifold, then for any $0\leq k\leq n$, the Lefschetz map (\ref{Lef-map}) is well defined.



  In symplectic geometry, many examples of symplectic manifolds without any K\"ahler structures have been constructed. It has been very well understood that the category of symplectic manifolds is much larger than the category of K\"ahler manifolds. In contrast, much less is known about the differences between $K$-contact manifolds and Sasakian manifolds. Indeed, in the Open Problem 7.4 of their monograph \cite{BG08}, Boyer and late Galicki asked the question of whether there exist examples of simply-connected compact $K$-contact manifolds which do not support any Sasakian structures. In the present paper, as a concrete application of the Lefschetz property of a $K$-contact manifold,  we give a simple construction of simply-connected $K$-contact manifolds without any Sasakian structures in any dimension $\geq 9$.

In the literature, first examples of simply-connected $K$-contact manifolds appeared in \cite{HT13}, which was posted on arxiv a few months before our paper. Their methods depend on the theory of fat bundles developed by Sternberg, Weinstein, and Lerman, and yield examples in any dimension $\geq 9$, c.f. \cite[Thm. 5.4]{HT13}.
After the present paper was posted on arxiv, two works (\cite{BFMT14},\cite{MT15}) have appeared addressing Open Problem 7.4 in \cite{BG08}.  In \cite{BFMT14}, it is proved that all higher order Massey products for simply connected Sasakian manifolds vanish, although there are Sasakian manifolds with non-vanishing triple Massey products. This yields examples of simply connected K-contact non-Sasakian manifolds in dimensions $\geq17$. Combing the methods of homotopy theory and symplectic surgery,  in \cite{MT15} it is shown that there exist  seven dimensional simply-connected compact $K$-contact manifolds without any Sasakian structures.


This paper is organized as follows.  Section \ref{odd-sym-hodge} reviews the machinery developed in \cite{He10} on odd dimensional symplectic Hodge theory. Section \ref{review-contact} collects some facts from contact and Sasakian geometry we need in this paper. Section \ref{main-result} proves that a compact $K$-contact manifold satisfies the Hard Lefschetz property in the sense of \cite{CNY13} if and only if it satisfies the transverse Hard Lefschetz property. Section \ref{K-contact-examples} produces simply-connected examples of $K$-contact manifolds without any Sasakian structures. 


\subsection*{Acknowledgement} I am very grateful to Cappelletti-Montano, De Nicola, and Yudin for pointing out a mistake in an early version of our construction of simply-connected $K$-contact manifolds without any Sasakian structures. I would also like to thank R. Sjamaar and Z. Wang for their interests in this work.

\section{Review of odd dimensional symplectic Hodge theory }\label{odd-sym-hodge}

In this section we present a brief review of background materials in odd dimensional symplectic Hodge theory. We refer to \cite{He10}
for more details on odd dimensional version of symplectic Hodge theory, and to \cite{brylinski;differential-poisson} and \cite{Yan96}
for general background on symplectic Hodge theory.  We begin with the definition of odd-dimensional symplectic  manifolds.

\begin{definition} (\cite{He10})\label{odd-symplectic} Suppose that $M$ is a manifold of dimension $2n+1$ with a volume form $\Omega$ and a
closed $2$-form $\omega$ of maximum rank. Then the triple $(M,\omega, \Omega)$ is called an odd-dimensional symplectic manifold.  \end{definition}

\begin{example}
A contact manifold $M$ with a contact form $\eta$ naturally gives rise to an odd-dimensional symplectic manifold $(M, \omega, \Omega)$ with $\omega=d\eta$ and $\Omega=\eta\wedge \dfrac{(d\eta)^n}{n!}$.
\end{example}

Throughout the rest of this section, we assume that $(M,\omega, \Omega)$ is a $2n+1$ dimensional
symplectic manifold  as given in Definition \ref{odd-symplectic}. We observe that since $\omega$ is of maximum rank, $\text{ker}\,\omega$ is a one dimensional foliation on $M$; moreover, there is a canonical vector field $\xi$, called the Reeb vector field, given by
\[ \iota_{\xi}\omega=0,\,\,\,\iota_{\xi}\Omega=\dfrac{\omega^n}{n!}.\]

We define the space of horizontal and basic forms, as well as basic De Rham cohomology group  on $M$ as follows.
\begin{equation}\label{basic} \begin{split}   &  \Omega_{hor}(M)=\{\alpha \in \Omega(M)\vert \iota_{\xi}\alpha=0\},\\
 &\Omega_{bas}(M)=\{\alpha \in \Omega(M)\vert \iota_{\xi}\alpha=0,\, \mathcal{L}_{\xi}\alpha=0\},\\&
 H^k_B(M,\R)=\dfrac{\text{ker} d\cap \Omega^k_{bas}(M)}{\text{im} d(\Omega_{bas}^{k-1}(M))} .\end{split} \end{equation}

The closed two $2$-form $\omega$ induces a non-degenerate pairing $G(\cdot,\cdot)$ on $\Omega_{hor}^k(M)$, which gives rise to the the following definition of the symplectic Hodge star $\star$ on horizontal forms.
\[ \beta_k\wedge \star\alpha_k = G(\beta_k,\alpha_k)\dfrac{\omega^n}{n!},\]
where $\alpha_k, \beta_k\in \Omega^{k}_{hor}(M)$.

It is easy to check that the symplectic Hodge star operator maps basic forms to basic forms. So there is a symplectic Hodge star operator on the space of basic forms.
\[ \star: \Omega_{bas}^{k}(M)\rightarrow \Omega^{2n-k}_{bas}(M).\]

The symplectic Hodge operator gives rise to the following symplectic Hodge adjoint operator of the exterior differential $d$.
 \[ \delta \alpha_k=(-1)^{k+1}\star d\star\alpha_k,\,\,\,\alpha_k\in\Omega_{bas}^k(M).\]

In this context, a basic form $\alpha$ is said to be symplectic Harmonic if and only if $d\alpha=\delta\alpha=0$.

There are three important operators, the Lefschetz map $L$, the dual Lefschetz map $\Lambda$, and the degree counting map $H$ which are defined on basic forms as follows.

\begin{equation} \label{three-canonical-maps} \begin{aligned} & L :\Omega_{bas}^*(M) \rightarrow \Omega_{bas}^{*+2}(M), \,\,\,\alpha \mapsto \alpha \wedge \omega,\\
 & \Lambda: \Omega_{bas}^*(M) \rightarrow \Omega_{bas}^{*-2}(M),\,\,\,\alpha \mapsto \star L\star \alpha,\\
& H: \Omega^k_{bas}(M)\rightarrow \Omega^k_{bas}(M),\,\,\,H(\alpha)=(n-k)\alpha,\,\,\,\alpha \in \Omega^{k}_{bas}(M).\end{aligned}\end{equation}

The actions of $L$, $\Lambda$ and $H$ on $\Omega_{bas}(M)$ satisfy the following commutator relations.

\begin{equation} \label{sl2-module-on-forms} [  \Lambda, L]=H, \,\,\,[H, \Lambda]=2\Lambda,\,\,\,[H, L]=-2L.
\end{equation}

Therefore, these three operators define a representation of the Lie algebra $sl(2)$ on $\Omega(M)$. Although the $sl_2$-module
$\Omega(M)$ is infinite dimensional, there are only finitely many eigenvalues of the operator $H$. The $sl_2$-modules of this type
are studied in great details in \cite{Ma95} and \cite{Yan96}.  Among other things,
we have the following results.

\begin{lemma} \label{yan's-result}  Let $(M,\omega,\Omega)$ be a $2n+1$ dimensional symplectic manifold. For any $0\leq k\leq n$, $\alpha\in \Omega_{bas}^k(M)$ is said to be primitive if  $L^{n-k+1}\alpha=0$. Then we have that
\begin{enumerate}

 \item [a)]
 a basic $k$-form $\alpha$ is primitive if and only if $\Lambda \alpha=0$;

 \item[b)] any differential form
$\alpha_k \in \Omega_{bas}^k(M)$ admits a unique Lefschetz decomposition
\begin{equation}\label{lefschetz-decompose-forms} \alpha_k =\displaystyle \sum_{r\geq \text{max}(\frac{k-n}{2}, 0)} \dfrac{L^r}{r!}\beta_{k-2r},\end{equation}
where $\beta_{k-2r}$ is a primitive basic form of degree $k-2r$.

\end{enumerate}
\end{lemma}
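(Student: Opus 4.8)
The plan is to read the statement as pure $sl_2$ representation theory. Under the dictionary $e=\Lambda$, $f=L$, $h=H$, the commutation relations (\ref{sl2-module-on-forms}) say precisely that $(\Lambda,L,H)$ is an $sl_2$-triple acting on $\Omega_{bas}(M)$, and by (\ref{three-canonical-maps}) a basic $k$-form is an $H$-eigenvector of weight $n-k$. Since $L$, $\Lambda$ and $H$ are pointwise-algebraic operators and, as noted above, $H$ has only finitely many eigenvalues, I would first reduce both assertions to finite-dimensional $sl_2$-theory by working fiberwise: at each $p\in M$ the triple acts on the finite-dimensional horizontal exterior algebra. The primitive components produced below are the images of $\alpha$ under fixed polynomials in $L$ and $\Lambda$ with coefficients depending only on $n$ and $k$, and these polynomials map $\Omega_{bas}(M)$ into itself; hence a fiberwise decomposition automatically assembles into one within basic forms.

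Next I would record the standard facts about finite-dimensional $sl_2$-modules that do all the work: complete reducibility, and the description of the irreducible module of highest weight $m$ as spanned by $v,Lv,\dots,L^{m}v$ with $\Lambda v=0$, $Hv=mv$ and $L^{m+1}v=0$, together with the identity $\Lambda L^{j}v=j(m-j+1)L^{j-1}v$, which is nothing but the iterate $[\Lambda,L^{r}]=rL^{r-1}(H-r+1)$ of (\ref{sl2-module-on-forms}) evaluated on a highest-weight vector. Decomposing a basic $k$-form into irreducible components then writes it as $\alpha=\sum_i c_i L^{j_i}w_i$, where each $w_i$ is a highest-weight vector of some weight $m_i$, necessarily with $m_i\ge |n-k|$, $m_i\equiv n-k\pmod 2$, and $j_i=(m_i-(n-k))/2\ge0$.

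For part (a), where $k\le n$ forces $m_i\ge n-k\ge0$ on every component, I would evaluate the two operators of the statement on this decomposition. The commutator identity gives $\Lambda\alpha=\sum_i c_i\,j_i(m_i-j_i+1)\,L^{j_i-1}w_i$, in which each factor $m_i-j_i+1$ is strictly positive, while $L^{n-k+1}\alpha=\sum_i c_i\,L^{(n-k)+1+j_i}w_i$, in which $L^{(n-k)+1+j_i}w_i$ vanishes exactly when $m_i=n-k$. The surviving summands in each expression lie in distinct irreducible submodules and are individually nonzero, hence linearly independent; therefore $\Lambda\alpha=0$ and $L^{n-k+1}\alpha=0$ each hold if and only if $c_i=0$ for every component with $m_i>n-k$, that is, if and only if $\alpha$ is a sum of highest-weight vectors. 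This proves the equivalence in (a).

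For part (b) I would regroup the same irreducible decomposition. Each $w_i$ is a primitive basic form of degree $n-m_i=k-2j_i$, so collecting the components with a common value $j_i=r$ and setting $\beta_{k-2r}:=r!\sum_{j_i=r}c_i w_i$ yields $\alpha=\sum_r \tfrac{L^r}{r!}\beta_{k-2r}$ with each $\beta_{k-2r}$ primitive of degree $k-2r$; the constraints $j_i\ge0$ and $m_i\ge0$ confine $r$ to the range $r\ge\max(\tfrac{k-n}{2},0)$. Uniqueness follows from the uniqueness of the decomposition into irreducibles, since $\beta_{k-2r}$ is thereby recovered from $\alpha$ by a fixed polynomial in $L$ and $\Lambda$. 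I expect the only point requiring genuine care to be the weight bookkeeping in (a): one must check that the coefficient $m_i-j_i+1$ never accidentally vanishes and that the conditions ``annihilated by $\Lambda$'' and ``killed by $L^{n-k+1}$'' cut out literally the same set of components. Once this is verified, both assertions fall out of complete reducibility.
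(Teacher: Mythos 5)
Your argument is correct, and it reconstructs essentially the standard proof: the paper itself gives no argument for this lemma, deferring entirely to Mathieu and Yan, whose treatments rest on exactly the $sl_2$ structure theory you invoke. The one stylistic difference is that Yan works directly with the infinite-dimensional module $\Omega_{bas}(M)$ using the abstract theory of $sl_2$-modules with finitely many $H$-eigenvalues, whereas you reduce to finite-dimensional complete reducibility fiberwise; this is legitimate, and you correctly identify the point that makes the reduction harmless, namely that the primitive projectors are universal polynomials in $L$ and $\Lambda$ (with coefficients depending only on $n$ and $k$) and hence carry basic forms to basic forms, so the pointwise decomposition is automatically smooth and basic. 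Your weight bookkeeping in (a) checks out: with $m_i=(n-k)+2j_i$ one has $L^{(n-k)+1+j_i}w_i=0$ iff $j_i=0$, and the coefficient $j_i(m_i-j_i+1)$ vanishes iff $j_i=0$ since $j_i\leq m_i$ on any nonzero component, so both conditions cut out precisely the highest-weight components.
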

\begin{remark}Throughout the rest of this paper, we will denote the space of primitive basic
 $k$-forms on $M$ by $\mathcal{P}_{bas}^k(M)$.

\end{remark}

\begin{definition} (\cite{He10})\label{odd-HLP}
The $2n+1$ dimensional symplectic manifold $M$ is said to satisfy the {\bf transverse Hard Lefschetz property} if and only if for any $0\leq k\leq n$, the Lefschetz map
  \begin{equation}\label{Lefschetz-map} L^{n-k}: H^{k}_B(M)\rightarrow H_B^{2n-k}(M)\, \,\,[\alpha]_B\mapsto [\omega^{n-k}\wedge \alpha]_B
  \end{equation} is an isomorphism.
 \end{definition}

\begin{remark}
We say that a one form $\eta\in \Omega(M)$ is a connection $1$-form if $\iota_{\xi}\eta=1$ and if $\mathcal{L}_{\xi}\eta=0$.
It is shown in \cite{He10} that if $M$ is compact, and if there is a connection $1$-form on $M$, then $\omega^k$ always represents a non-trivial cohomology class
in $H^{2k}_B(M,\R)$.  Clearly, if $M$ is a contact manifold with a contact one form $\eta$, then $\eta$ will be a connection $1$-form on $M$. If $M$ is also compact, then $\omega^k$ always represents a non-trivial cohomology class in $H^{2k}_B(M,\R)$.
\end{remark}

Among other things, \cite{He10} extended Mathieu's theorem, as well as the symplectic $d\delta$-lemma, to the odd dimensional case,

  \begin{theorem}\label{Mathieu's-theorem} ( \cite{Ma95}, \cite{He10}) On a compact odd dimensional symplectic manifold $M$, every basic De Rham cohomology class in $H^*_B(M)$ admits a symplectic Harmonic representative if and only if
 the manifold satisfies the transverse Hard Lefschetz property. \end{theorem}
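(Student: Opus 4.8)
The plan is to transplant the representation-theoretic proof of Mathieu's theorem (in the form given by Yan for compact symplectic manifolds) to the complex of basic forms $(\Omega_{bas}^*(M),d)$, using the $sl(2)$-action of $L$, $\Lambda$, $H$ from (\ref{sl2-module-on-forms}), the Lefschetz decomposition of Lemma \ref{yan's-result}, and the symplectic Hodge star. The essential structural feature to keep in mind is that the symplectic ``Laplacian'' $d\delta+\delta d$ vanishes, because $d$ and $\delta$ anticommute; consequently the existence of symplectic Harmonic representatives is a genuinely cohomological condition rather than an analytic one, and both implications must be extracted from the algebra of the $sl(2)$-module rather than from any Hodge decomposition.

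First I would record the Brylinski--Koszul type identity $\delta=(-1)^{k+1}\star d\star=[d,\Lambda]$ on basic forms, together with the Weil formula for the symplectic star on a primitive form: if $\beta\in\mathcal{P}_{bas}^j(M)$ then $\star(L^r\beta)$ is a nonzero constant multiple of $L^{n-j-r}\beta$. Combining these with $[L,d]=0$ yields the key lemma: a basic form $\alpha=\sum_r L^r\beta_{k-2r}$, written in its Lefschetz decomposition, is symplectic Harmonic if and only if every primitive component $\beta_{k-2r}$ is $d$-closed. The nontrivial direction is that $d\alpha=\delta\alpha=0$ forces $d\beta_{k-2r}=0$ for each $r$; this follows by decomposing each $d\beta_{k-2r}$ into primitives and using the uniqueness of the Lefschetz decomposition to solve the resulting triangular system. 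Two consequences will be used repeatedly: $L$ maps symplectic Harmonic forms to symplectic Harmonic forms, and the subspace $\mathcal{B}^k\subseteq H^k_B(M)$ of classes possessing a symplectic Harmonic representative is graded and stable under $L$.

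To prove that the transverse Hard Lefschetz property implies the existence of Harmonic representatives, I would use $[L,d]=0$ to let $L$ and $H$ act on $H^*_B(M)$; the hypothesis that $L^{n-k}\colon H^k_B(M)\to H^{2n-k}_B(M)$ is an isomorphism is exactly what promotes $H^*_B(M)$ to a finite $H$-spectrum $sl(2)$-module, and hence produces a cohomological Lefschetz decomposition $H^k_B(M)=\bigoplus_r L^r\,\mathcal{P}^{k-2r}_B(M)$ with primitive summands $\mathcal{P}^j_B(M)=\ker\!\big(L^{n-j+1}\colon H^j_B(M)\to H^{2n-j+2}_B(M)\big)$. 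By the $L$-stability of $\mathcal{B}^*$ it then suffices to represent each primitive class in degree $j\le n$ by a $d$-closed primitive basic form; applying $L^r$ and summing produces a Harmonic representative of an arbitrary class. Conversely, to deduce the transverse Hard Lefschetz property from the existence of Harmonic representatives, I would take a class in $H^{2n-k}_B(M)$, represent it by a symplectic Harmonic form $\gamma=\sum_r L^r\beta_{(2n-k)-2r}$ with closed primitive components, and use the Weil formula to check that $L^{n-k}\star$ acts as a nonzero scalar on each primitive component $L^r\beta$; this shows each such component, hence the whole class, lies in the image of $L^{n-k}$, so $L^{n-k}$ is surjective for every $k\le n$. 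Finally I would invoke basic Poincar\'e duality for the Reeb foliation of the compact manifold $M$, which gives $\dim H^k_B(M)=\dim H^{2n-k}_B(M)$, to upgrade surjectivity in all degrees to bijectivity.

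I expect the main obstacle to be the construction, under the transverse Hard Lefschetz hypothesis, of a $d$-closed primitive representative for each primitive cohomology class --- equivalently, the precise equivalence between Harmonic-representability and the weak Lefschetz (surjectivity) condition. Because the symplectic Laplacian is identically zero, this cannot be obtained by orthogonal projection; it must instead be built by an inductive argument along the Lefschetz filtration, correcting a given closed representative by exact forms so as to annihilate its non-primitive components once the cohomological obstruction $L^{n-j+1}[\beta]=0$ is known to vanish. A secondary technical point is to ensure that He's framework supplies, in the foliated basic setting, both the finite-dimensionality of $H^*_B(M)$ and the basic Poincar\'e duality used above; both hold for the taut Riemannian Reeb foliation but should be stated carefully, since the basic complex is not the de Rham complex of a closed manifold.
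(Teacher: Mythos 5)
This theorem is quoted in the paper from \cite{Ma95} and \cite{He10} and no proof of it appears in the text, so there is no in-paper argument to compare against; what you have written is a reconstruction of the standard Mathieu--Yan argument transplanted to the basic complex, and it is essentially correct. Your key lemma (a basic form is symplectic Harmonic if and only if the primitive components of its Lefschetz decomposition are closed) is right: one direction is the identity $\delta\beta=d\Lambda\beta-\Lambda d\beta=0$ for a closed primitive $\beta$ together with the $L$-invariance of Harmonic forms, and the converse is cleanest if, instead of your triangular system (which as sketched uses only $d\alpha=0$ and would not suffice alone), you observe that the symplectic Harmonic forms form an $sl(2)$-submodule --- this needs $[\Lambda,\delta]=0$, which is standard but not listed in Lemma \ref{commutator} --- so that the primitive projections, being universal polynomials in $L,\Lambda,H$, preserve Harmonicity. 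Granting that, the forward implication is immediate from Theorem \ref{primitive-decomposition} together with Lemma \ref{Yan's-lemma}; the ``main obstacle'' you identify is exactly Lemma \ref{Yan's-lemma}, which the paper records (following Yan) \emph{without} any Lefschetz hypothesis, so you need not rebuild it by induction along the Lefschetz filtration. For the converse, the Weil formula is an unnecessary detour: in the Lefschetz decomposition of a Harmonic $(2n-k)$-form every nonzero term $L^r\beta_{2n-k-2r}$ automatically has $r\geq n-k$ (since $L^{n-j+1}$ annihilates a primitive $j$-form), which exhibits it directly as $L^{n-k}$ of a Harmonic $k$-form. Your closing caveat is the one genuine issue to police: the finite-dimensionality of $H_B^*(M,\R)$ and the nondegenerate pairing $H^k_B(M,\R)\otimes H^{2n-k}_B(M,\R)\rightarrow\R$ needed to upgrade surjectivity to bijectivity are proved in the paper only for compact $K$-contact manifolds (Proposition \ref{exact-sequence}), and for a general compact odd-dimensional symplectic manifold they must be imported from \cite{He10}, where (as in Theorem \ref{symplectic-ddelta}) the existence of a connection one-form plays a role; without that duality your argument yields only surjectivity of the Lefschetz maps.
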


%



\begin{theorem}\label{symplectic-ddelta}(\cite{Mer98}, \cite{Gui01}, \cite{He10}) Assume that $M$ is a compact odd dimensional symplectic manifold which satisfies the transverse Hard Lefschetz property, and which admits a connection
one form. Then on the space of basic forms, we have the following result.  \[ \text{im} d\cap \text{ker}\delta =\text{ker} d \cap \text{im} \delta=\text{im} d\delta.\]

\end{theorem}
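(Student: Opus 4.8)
The plan is to transplant the proof of the symplectic $d\delta$-lemma from the even dimensional theory of Merkulov, Guillemin and Yan \cite{Mer98,Gui01,Yan96} to the complex of basic forms, using Mathieu's theorem (Theorem \ref{Mathieu's-theorem}) as the bridge from the transverse Hard Lefschetz property. First I would record the structural identities on basic forms established in \cite{He10}. Since $\delta=(-1)^{k+1}\star d\star$ and $\star^2=\id$ on $\Omega_{bas}(M)$, the operators $d$ and $\delta$ are interchanged, up to sign, by the symplectic star; moreover one has the symplectic K\"ahler identity $\delta=[\Lambda,d]$, from which $d^2=0$ yields at once the anticommutation relation $d\delta+\delta d=0$. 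In particular the symplectic Laplacian vanishes identically, and a basic form is symplectic Harmonic precisely when it lies in $\mathcal{H}:=\ker d\cap\ker\delta$.

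The easy half of the statement is then immediate: if $\gamma=d\delta\mu$ then $d\gamma=0$, while $\delta\gamma=\delta d\delta\mu=-d\delta\delta\mu=0$ and $\gamma=-\delta d\mu$, so $\gamma$ lies in $\im\delta$; hence $\im d\delta$ is contained in both $\im d\cap\ker\delta$ and $\ker d\cap\im\delta$. For the reverse inclusions, note that any $\beta\in\im d\cap\ker\delta$ is automatically $d$-closed, whence
\[ \im d\cap\ker\delta=\mathcal{H}\cap\im d,\qquad \ker d\cap\im\delta=\mathcal{H}\cap\im\delta. \]
Applying $\star$ interchanges these two identities, so the whole theorem collapses to the single assertion that a $d$-exact symplectic Harmonic basic form is $d\delta$-exact.

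To prove this I would establish, as the real content of the theorem, the symplectic Hodge decomposition
\[ \Omega^k_{bas}(M)=\mathcal{H}^k\oplus\im d\oplus\im\delta \]
in every degree $k$. Granting it, a class $\beta=d\eta\in\mathcal{H}\cap\im d$ is handled by decomposing $\eta=h+da+\delta b$ in degree $k-1$: since $h$ is Harmonic and $dda=0$, we obtain $\beta=d\eta=d\delta b\in\im d\delta$, as required. The decomposition itself is where the transverse Hard Lefschetz property is used. I would expand each basic form into its Lefschetz components $\sum_r L^r\beta_{k-2r}$ via Lemma \ref{yan's-result}, and track the degree $\pm1$ action of $d$ and of $\delta=[\Lambda,d]$ on the primitive pieces; Theorem \ref{Mathieu's-theorem} provides a Harmonic representative for each basic class, while the Lefschetz isomorphisms (\ref{Lefschetz-map}) allow one to invert the relevant powers of $L$ componentwise. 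The connection one form enters here by furnishing the transverse integration $\alpha\mapsto\int_M\eta\wedge\alpha$ on basic forms, and hence the basic Poincar\'e duality, that make $\star$, $\delta$ and the Harmonic projection behave well on the compact manifold.

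The main obstacle is precisely this decomposition. Unlike in Riemannian Hodge theory there is no elliptic Laplacian to appeal to --- indeed $d\delta+\delta d=0$ --- so the splitting is not formal and cannot be obtained by a spectral argument; it must instead be produced by the $sl(2)$ representation-theoretic bookkeeping on the Lefschetz components together with the Lefschetz isomorphism, which is exactly the point at which the hypothesis is indispensable. Carrying this out is the odd dimensional, basic-forms analogue of the Merkulov--Guillemin--Yan argument, and it is the step I expect to require the most care.
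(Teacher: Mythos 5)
First, note that the paper itself offers no proof of Theorem \ref{symplectic-ddelta} --- it is imported from \cite{Mer98}, \cite{Gui01} and \cite{He10} --- so there is no internal argument to compare yours against, and I can only assess the proposal on its merits. Your reductions are correct and worth keeping: $d\delta=-\delta d$, the easy inclusion of $\im d\delta$ into both $\im d\cap\ker\delta$ and $\ker d\cap\im\delta$, the identities $\im d\cap\ker\delta=\mathcal{H}\cap\im d$ and $\ker d\cap\im\delta=\mathcal{H}\cap\im\delta$ with $\mathcal{H}=\ker d\cap\ker\delta$, and the observation that $\star$ swaps the two, so that everything reduces to showing $\mathcal{H}\cap\im d\subseteq\im d\delta$.

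The gap is the decomposition $\Omega^k_{bas}(M)=\mathcal{H}^k\oplus\im d\oplus\im\delta$ on which the whole argument rests: it is false, not merely hard. It fails as a direct sum because $\im d\delta$ already sits inside $\mathcal{H}\cap\im d$, and, more seriously, it fails even as a plain sum of subspaces. Since $\mathcal{H}$ and $\im d$ lie in $\ker d$ while $\im\delta\subseteq\ker\delta$ (as $\delta^2=0$), the right-hand side is contained in $\ker d+\ker\delta$. If one had $\Omega_{bas}=\ker d+\ker\delta$, then writing $\mu=a+b$ with $da=0$, $\delta b=0$, and expanding $b=h+\delta c$ using the $\star$-dual of Theorem \ref{Mathieu's-theorem} (namely $\ker\delta=\mathcal{H}+\im\delta$, valid under the transverse Hard Lefschetz property), one gets $d\mu=d\delta c$, whence $\im d\subseteq\im d\delta\subseteq\ker\delta$ and so $\delta d\equiv 0$ on basic forms. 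That is absurd in essentially every example covered by the theorem: already for the Boothby--Wang fibration over $T^2$ (or over any K\"ahler base) the basic complex is the de Rham complex of the base and $d\delta\neq 0$ there. Moreover, the weaker statement your argument actually uses --- that every $\eta$ with $d\eta\in\mathcal{H}$ can be written as $h+da+\delta b$ --- is, given Mathieu's theorem, logically equivalent to the target inclusion $\mathcal{H}\cap\im d\subseteq\im d\delta$, so that route is circular. The arguments of Merkulov, Guillemin and He work instead with the two one-sided identities $\ker d=\mathcal{H}+\im d$ and $\ker\delta=\mathcal{H}+\im\delta$, combined with the finite-dimensionality and Poincar\'e duality of the basic cohomology and an induction over the primitive/Lefschetz components using the relations of Lemma \ref{commutator}; no three-term splitting of $\Omega_{bas}$ is available or needed.
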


Next, we present the primitive decomposition of the basic cohomology. We first define the basic version of the primitive cohomology as follows.

\begin{definition}\label{primitive1}Let $(M,\omega, \Omega)$ be a $2n+1$ dimensional symplectic manifold. For any $0\leq r \leq n$, the $r$-th primitive basic cohomology group, $PH_B^{r}(M,\R)$, is defined as follows.
 \[   PH_B^r (M,\R)= \text{ker}(L^{n-r+1}: H_B^r(M,\R)\rightarrow H_B^{2n-r+2}(M,\R)) .\]
\end{definition}





When the odd-dimensional symplectic manifold $M$ satisfies the transverse Hard Lefschetz property, the following primitive decomposition holds for basic De Rham cohomology.


\begin{theorem} (c.f. \cite{Yan96})\label{primitive-decomposition} Assume that $M$ has the transverse Hard Lefschetz property. Then
\begin{equation}   H_B^k(M,\R)=\bigoplus_r L^r PH_B^{k-2r}(M,\R).
\end{equation}\end{theorem}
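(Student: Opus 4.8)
The plan is to deduce the primitive (Lefschetz) decomposition of basic cohomology from the corresponding decomposition at the level of basic forms, using the transverse Hard Lefschetz property to transport it to cohomology via symplectic harmonic representatives. The key input is Theorem \ref{Mathieu's-theorem}: under the transverse Hard Lefschetz hypothesis, every class in $H_B^k(M,\R)$ admits a symplectic harmonic representative. First I would fix a symplectic harmonic representative $\alpha_k$ of a given class in $H_B^k(M,\R)$ and apply the Lefschetz decomposition of forms from Lemma \ref{yan's-result}(b), writing $\alpha_k=\sum_r \frac{L^r}{r!}\beta_{k-2r}$ with each $\beta_{k-2r}\in \mathcal{P}_{bas}^{k-2r}(M)$ primitive.

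The crux is then to check that each primitive component $\beta_{k-2r}$ is itself $d$-closed, and indeed symplectic harmonic, so that it represents a genuine basic cohomology class, and moreover a primitive one in the sense of Definition \ref{primitive1}. Here I would invoke the commutation properties of $d$ and $\delta$ with $L$ and $\Lambda$ on basic forms: $L$ commutes with $d$ since $\omega$ is closed, while the interaction of $\delta$ with $\Lambda$ controls primitivity. The harmonicity $d\alpha_k=\delta\alpha_k=0$, combined with the uniqueness of the Lefschetz decomposition in Lemma \ref{yan's-result}(b), forces each $\beta_{k-2r}$ to satisfy $d\beta_{k-2r}=0$ and $\delta\beta_{k-2r}=0$ separately; the point is that $d$ and $\delta$ preserve the primitive filtration up to known $L$-twists, so their vanishing on $\alpha_k$ propagates component by component. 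This gives a well-defined map $[\alpha_k]_B\mapsto \sum_r L^r[\beta_{k-2r}]_B$ sending the class into $\bigoplus_r L^r PH_B^{k-2r}(M,\R)$.

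To establish that this yields the direct sum decomposition, I would argue in two directions. Surjectivity is immediate once one knows each $L^r PH_B^{k-2r}$ lands in $H_B^k$. For the directness of the sum and injectivity, I would again use the transverse Hard Lefschetz property: the Lefschetz isomorphisms $L^{n-k}\colon H_B^k\xrightarrow{\sim}H_B^{2n-k}$ are exactly what is needed to run the standard $sl(2)$-representation-theoretic argument, identifying $H_B^*(M,\R)$ as a finite-dimensional $sl(2)$-module (via $L,\Lambda,H$ acting on harmonic representatives, which descend to cohomology under the hypothesis) and applying the classification of such modules. The primitive classes $PH_B^{k-2r}(M,\R)$ are precisely the lowest-weight vectors, and the $L^r$-translates assemble into the full module, giving the direct sum.

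The main obstacle I anticipate is the well-definedness step: ensuring that the decomposition descends to cohomology independently of the choice of harmonic representative, and that each primitive piece is closed. This is where Theorem \ref{Mathieu's-theorem} and the symplectic $d\delta$-lemma (Theorem \ref{symplectic-ddelta}) do the essential work, since the latter guarantees that two harmonic representatives of the same class differ by something in $\im d\delta$, which is controlled by the primitive filtration. Once the operators $L$ and $\Lambda$ are known to act on $H_B^*(M,\R)$ and satisfy the $sl(2)$ relations \eqref{sl2-module-on-forms} at the cohomology level, the decomposition follows formally from the representation theory of $sl(2)$, exactly as in the compact K\"ahler case treated in \cite{Yan96}.
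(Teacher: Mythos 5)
The paper offers no proof of this statement at all---it is quoted from Yan's work on symplectic Hodge theory---and your sketch reproduces essentially that standard argument: the symplectic harmonic basic forms constitute an $sl(2)$-submodule of $\Omega_{bas}(M)$ (by the relations $[d,L]=0$, $[\delta,L]=d$, $[d,\Lambda]=\delta$, $[\delta,\Lambda]=0$), so the Lefschetz decomposition of a harmonic representative has closed primitive components and every class lies in $\sum_r L^r PH_B^{k-2r}(M,\R)$, while directness follows from the Lefschetz isomorphisms. Your one loose point---that $\Lambda$ ``descends to cohomology''---is repaired exactly as you indicate by the $d\delta$-lemma (an exact harmonic form lies in $\mathrm{im}\,d\delta$, which $\Lambda$ preserves by Lemma \ref{commutator}); alternatively, directness needs no $sl(2)$-action on cohomology at all, only an elementary induction using the isomorphisms $L^{n-k}$, which is the linear-algebra argument the paper itself runs in Step 2 of the proof of Theorem \ref{main-result1}.
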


The following result does not assume that $M$ has the transverse Hard Lefschetz property. Its proof is completely analogous to the case of even dimensional symplectic Hodge theory.  We refer to \cite{Yan96} for details.

\begin{lemma} \label{Yan's-lemma} Any primitive cohomology class in $PH_B^r(M,\R)$ is represented by a closed primitive basic form.
\end{lemma}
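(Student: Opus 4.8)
The plan is to show that any primitive basic cohomology class in $PH_B^r(M,\R)$ has a representative that is simultaneously closed and primitive at the form level, mirroring the classical even-dimensional argument of Yan. First I would start with an arbitrary closed basic $r$-form $\alpha$ representing the given class $[\alpha]_B \in PH_B^r(M,\R)$, where by Definition \ref{primitive1} the primitivity of the class means $L^{n-r+1}[\alpha]_B = 0$ in $H_B^{2n-r+2}(M,\R)$. The goal is to modify $\alpha$ by an exact basic form so that the result is a closed form lying in $\mathcal{P}_{bas}^r(M)$, i.e. annihilated by $\Lambda$ (equivalently, by part (a) of Lemma \ref{yan's-result}, satisfying $L^{n-r+1}\alpha = 0$ at the form level).

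The key step is to invoke the Lefschetz decomposition of forms from Lemma \ref{yan's-result}(b). Writing $\alpha = \sum_{s\geq 0} \frac{L^s}{s!}\beta_{r-2s}$ with each $\beta_{r-2s}$ a primitive basic form, I would apply $L^{n-r+1}$ and use the representation theory of $sl(2)$ encoded in the commutator relations (\ref{sl2-module-on-forms}): on a primitive form of degree $r-2s$, the operator $L^{n-r+1}$ composed with the further powers of $L$ has a controlled kernel, so $L^{n-r+1}\alpha$ decomposes into components whose vanishing (in cohomology) forces the higher-order pieces $\beta_{r-2s}$ for $s\geq 1$ to be cohomologically trivial after an appropriate primitive projection. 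The upshot is that the primitive component $\beta_r$ of $\alpha$ is itself closed and represents the same class, while the contributions from $L^s\beta_{r-2s}$ with $s\geq 1$ can be absorbed into exact terms. This is the standard mechanism by which one trades a general closed representative for a closed primitive one, and it rests purely on the linear-algebraic structure of the $sl(2)$-action together with the fact that $L$ and $d$ commute on basic forms (since $\omega$ is closed).

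The main obstacle I expect is verifying that the primitive component extracted from the Lefschetz decomposition is actually closed, rather than merely primitive. The difficulty is that $d$ need not preserve the individual graded pieces of the Lefschetz decomposition a priori; one must check that $d\alpha = 0$ together with the commutation of $L$ and $d$ forces each primitive component to be closed. This is precisely where the argument becomes delicate, and it is the point at which I would lean on the purely algebraic fact that in an $sl(2)$-module the primitive projection commutes with any degree-preserving, $L$-equivariant operator, so that the closedness of $\alpha$ propagates to its primitive part. Since this step is formally identical to the even-dimensional case and uses only the $sl(2)$-structure established in (\ref{sl2-module-on-forms}) and Lemma \ref{yan's-result}, I would cite \cite{Yan96} for the detailed computation rather than reproduce it, noting only that no appeal to the transverse Hard Lefschetz property is needed—consistent with the hypothesis-free statement of the lemma.
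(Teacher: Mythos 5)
Your proposal has a genuine gap, and it sits exactly where you flagged the ``delicate point.'' The claim that the primitive component $\beta_r$ in the Lefschetz decomposition of a closed form $\alpha$ is itself closed is false in general. The algebraic fact you invoke --- that the primitive projection commutes with degree-preserving, $L$-equivariant operators --- does not apply to $d$: the exterior differential raises degree by one and notoriously fails to commute with the Lefschetz projectors. From $d\alpha=\sum_s \frac{L^s}{s!}\,d\beta_{r-2s}=0$ one can only conclude that the \emph{primitive part} of $d\beta_r$ vanishes; the non-primitive part of $d\beta_r$ may cancel against $L\,d\beta_{r-2}$ and higher terms, so $d\beta_r\neq 0$ is entirely possible. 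Moreover, even if $\beta_r$ were closed, you give no argument that $\alpha-\beta_r=\sum_{s\geq 1}\frac{L^s}{s!}\beta_{r-2s}$ is exact; the hypothesis is only that $L^{n-r+1}[\alpha]_B=0$ in cohomology, which says nothing directly about the exactness of the individual higher Lefschetz components (and without the transverse Hard Lefschetz property one cannot upgrade such cohomological information).

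The argument the paper points to (via \cite{Yan96}) is different and much simpler, and it avoids decomposing $\alpha$ altogether. Start with a closed basic $r$-form $\alpha$ with $L^{n-r+1}\alpha=d\gamma$ for some basic $\gamma\in\Omega^{2n-r+1}_{bas}(M)$. By the pointwise $sl(2)$ linear algebra underlying Lemma \ref{yan's-result}, the map $L^{n-r+1}\colon \Omega^{r-1}_{bas}(M)\to\Omega^{2n-r+1}_{bas}(M)$ is an isomorphism (it is $L^{n-j}$ with $j=r-1$), so $\gamma=L^{n-r+1}\beta$ for a basic $(r-1)$-form $\beta$. Since $[d,L]=0$, the form $\alpha'=\alpha-d\beta$ is closed, cohomologous to $\alpha$, and satisfies $L^{n-r+1}\alpha'=d\gamma-L^{n-r+1}d\beta=0$, i.e.\ $\alpha'$ is a closed primitive basic representative. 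I recommend replacing your decomposition argument with this one; it requires no control over the differentials of the individual Lefschetz components and, as you correctly anticipated, no appeal to the transverse Hard Lefschetz property.
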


Finally, we collect here a few commutator relations which we will use later in this paper.

\begin{lemma} \label{commutator} \[ [d, \Lambda]=\delta,\,\,\, [\delta, L]=d, \,\,\,[d\delta, L]=0,\,\,\,[d\delta,\Lambda]=0.\]

\end{lemma}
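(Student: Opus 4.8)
The plan is to treat the first identity $[d,\Lambda]=\delta$ as the analytic heart of the lemma and to deduce the other three by purely formal manipulations from it, from the $sl(2)$-relations \eqref{sl2-module-on-forms}, and from the fact that $d$ commutes with $L$. I first record the preliminary facts that I use freely. Since $\omega$ is closed and basic, $L=\,\cdot\wedge\omega$ preserves $\Omega_{bas}(M)$ and $d(\omega\wedge\alpha)=\omega\wedge d\alpha$, so $[d,L]=0$; the exterior differential preserves basic forms (by Cartan's formula $\iota_\xi d\alpha=\mathcal{L}_\xi\alpha-d\iota_\xi\alpha$); the symplectic star satisfies $\star^2=\id$ on $\Omega_{bas}(M)$, as in \cite{He10}, whence $\delta^2=0$ follows from $d^2=0$; and on $\Omega^k_{bas}(M)$ one has $\Lambda=\star L\star$ and $\delta=(-1)^{k+1}\star d\star$ by definition.

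The main step, and the one I expect to be the real obstacle, is the symplectic K\"ahler identity $[d,\Lambda]=\delta$. I would prove it exactly as in the even-dimensional symplectic Hodge theory of Brylinski and Yan, transcribed to basic forms. Using the Lefschetz decomposition of Lemma \ref{yan's-result}, it suffices to verify the identity on forms of the shape $L^r\beta$ with $\beta\in\mathcal{P}^s_{bas}(M)$ primitive; on such forms the symplectic star has the explicit Weil-type expression $\star L^r\beta=(-1)^{s(s+1)/2}\tfrac{r!}{(n-s-r)!}L^{n-s-r}\beta$, which lets me compute $\delta=(-1)^{k+1}\star d\star$ and $d\Lambda-\Lambda d$ on each Lefschetz component and match them. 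The point that makes the even-dimensional argument carry over verbatim is that $d,\star,L,\Lambda,H$ all preserve $\Omega_{bas}(M)$ and satisfy there exactly the structural relations of Lemma \ref{yan's-result} and \eqref{sl2-module-on-forms}, so no genuinely new analysis is required beyond restricting to the transverse directions.

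Granting $[d,\Lambda]=\delta$, the remaining identities are formal. For $[\delta,L]=d$ I write $\delta=d\Lambda-\Lambda d$ and use $[d,L]=0$ to get $\delta L-L\delta=d(\Lambda L-L\Lambda)-(\Lambda L-L\Lambda)d=dH-Hd=[d,H]$; since $H$ acts on $\Omega^k_{bas}(M)$ by $(n-k)$ while $d$ raises degree by one, $[d,H]=d$, giving $[\delta,L]=d$. Next, $d\delta=-\delta d$ follows immediately from $\delta=d\Lambda-\Lambda d$ and $d^2=0$. Then $[d\delta,L]=0$ comes from $\delta L=L\delta+d$, since $d\delta L=d(L\delta+d)=Ld\delta$ using $dL=Ld$ and $d^2=0$.

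For the last identity $[d\delta,\Lambda]=0$ I first establish $[\Lambda,\delta]=0$, which is the symplectic-star dual of $[d,L]=0$: conjugating by $\star$ sends $\Lambda$ to $L$ and $\delta$ to the degree-twisted operator acting on $\Omega^k_{bas}(M)$ by $(-1)^{k+1}d$, and a direct degree count shows this twisted operator still commutes with $L$, so $\star[\Lambda,\delta]\star$ vanishes and hence so does $[\Lambda,\delta]$. Finally, using $d\delta=-\delta d$ and then $d\Lambda=\Lambda d+\delta$ together with $\delta^2=0$, I compute $[d\delta,\Lambda]=-\delta d\Lambda+\Lambda\delta d=-\delta\Lambda d+\Lambda\delta d=[\Lambda,\delta]d=0$. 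The only place where honest work is needed is the K\"ahler identity $[d,\Lambda]=\delta$; everything after it is bookkeeping with the $sl(2)$-relations and the star operator.
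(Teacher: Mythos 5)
Your proposal is correct. Note that the paper itself gives no proof of this lemma at all --- it is stated as a collection of known facts imported from the symplectic Hodge theory literature (\cite{He10}, \cite{Yan96}, \cite{brylinski;differential-poisson}) --- so there is nothing in the paper to compare against; your route is precisely the standard one: the Weil formula for $\star$ on Lefschetz components to get the K\"ahler-type identity $[d,\Lambda]=\delta$, and then purely formal $sl(2)$ bookkeeping (your computations $[\delta,L]=[d,H]=d$, $d\delta=-\delta d$, $[\Lambda,\delta]=\star[L,\tilde d\,]\star=0$, and the resulting vanishing of $[d\delta,L]$ and $[d\delta,\Lambda]$ all check out against the paper's sign conventions for $H$, $\Lambda$, and $\delta$).
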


\section{Review of contact and Sasakian geometry}\label{review-contact}

 Let $(M,\eta)$ be a co-oriented contact manifold with a contact one form $\eta$. We say that $(M,\eta)$ is $K$-contact if there is an endomorphism
 $\Phi: TM\rightarrow TM$ such that the following conditions are satisfied.
 \begin{itemize}
  \item[1)]  $\Phi^2=-Id+\xi\otimes \eta$, where $\xi$ is the Reeb vector field of $\eta$;
  \item [2)]  the contact one form $\eta$ is compatible with $\Phi$ in the sense that
  \[ d\eta(\Phi(X),\Phi(Y))=d\eta(X,Y)\]
  for all $X$ and $Y$, moreover, $d\eta(\Phi(X),X)>0$ for all non-zero $X \in \text{ker}\,\eta$;

  \item[3)] the Reeb field of $\eta$ is a Killing field with respect to the Riemannian metric defined
  by the formula
  \[ g(X,Y)=d\eta(\Phi(X),Y)+\eta(X)\eta(Y).\]

 \end{itemize}

Given a $K$-contact structure $(M,\eta,\Phi,g)$, one can define a metric cone
\[ (C(M), g_C)=(M\times \R_+, r^2g+dr^2),\] where $r$ is the radial coordinate. The $K$-contact structure $(M,\eta,\Phi)$ is called Sasakian if this metric cone is a K\"ahler manifold with K\"ahler form $\dfrac{1}{2} d(r^2\eta)$.

Let $(M,\eta)$ be a contact manifold with contact one form $\eta$ and a characteristic Reeb vector $\xi$. We note that the basic cohomology on $M$
 given in (\ref{basic}) in the context of odd dimensional symplectic geometry agrees with the usual basic cohomology with respect to the characteristic foliation on $M$. We need the following result from \cite[Sec. 7.2]{BG08}, which plays an important role in our work.

\begin{proposition}\label{exact-sequence}\begin{itemize}   \item[1)] On any $K$-contact manifold $(M,\eta)$, there is a long exact cohomology sequence
\begin{equation}\label{Gysin}  \cdots \rightarrow H^k_B(M,\R) \xrightarrow{i_*} H^k(M, \R)\xrightarrow{j_k} H^{k-1}_B(M,\R)\xrightarrow{\wedge[d\eta]} H^{k+1}_B(M,\R)\xrightarrow{i_*} \cdots,\end{equation} where
$i_*$ is the map induced by the inclusion, and $j_k$ is the map induced by $\iota_{\xi}$.

\item[2)] If $(M,\eta)$ is a compact $K$-contact manifold of dimension $2n+1$, then for any $r\geq 0$ the basic cohomology $H_B^r(M,\R)$ is finite dimensional, and for $r>2n$,
the basic cohomology $H_B^r(M,\R)=0$; moreover, for any $0\leq r\leq 2n$, there is a non-degenerate pairing
\[ H^r_B(M,\R)\otimes H^{2n-r}_B(M,\R)\rightarrow \R,\,\,\,([\alpha]_B,[\beta]_B)\mapsto \int_M\, \eta\wedge\alpha\wedge\beta. \]

\end{itemize}

\end{proposition}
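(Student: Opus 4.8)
The plan is to prove Proposition \ref{exact-sequence} by constructing the long exact sequence from a short exact sequence of complexes and then deriving the finiteness and duality statements from it. For part (1), the key observation is that on a $K$-contact manifold the contact form $\eta$ is a connection $1$-form: $\iota_\xi\eta=1$ and $\mathcal{L}_\xi\eta=0$. This lets me define, for each degree, a short exact sequence of complexes
\begin{equation*}
0\longrightarrow \Omega^{*-1}_{bas}(M)\xrightarrow{\ \eta\wedge\ } \Omega^{*}(M)\xrightarrow{\ \iota_\xi\ } \Omega^{*-1}_{bas}(M)\longrightarrow 0,
\end{equation*}
where the left-hand differential is the basic differential twisted so that the connecting map becomes cup product with $[d\eta]$. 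The point is that every form $\alpha$ on $M$ decomposes uniquely as $\alpha=\alpha_0+\eta\wedge\alpha_1$ with $\alpha_0,\alpha_1$ horizontal; using $\mathcal{L}_\xi=d\iota_\xi+\iota_\xi d$ and the fact that $\mathcal{L}_\xi\alpha=0$ on the relevant subcomplex, one checks exactness at each spot and the compatibility of $d$ with the maps. The standard homological zig-zag then produces the long exact sequence, and a direct computation identifies the connecting homomorphism with $\wedge[d\eta]$ (the image of $[\iota_\xi\alpha]_B$ under $d$ picks up precisely a factor of $d\eta$) and $j_k$ with the map induced by $\iota_\xi$.

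For part (2), I would first establish finite dimensionality. The cleanest route is to invoke the transverse elliptic theory for the characteristic foliation: on a compact $K$-contact manifold the basic Laplacian $\Delta_B$ is transversally elliptic, so basic Hodge theory gives $H^r_B(M,\R)\cong \mathcal{H}^r_B(M)$, the finite-dimensional space of basic harmonic forms; alternatively, one can bootstrap finiteness from the long exact sequence together with the finite dimensionality of $H^*(M,\R)$, provided one knows $H^r_B=0$ for $r$ large. The vanishing $H^r_B(M,\R)=0$ for $r>2n$ is immediate once we note that $\Omega^r_{bas}(M)=0$ for $r>2n$: a basic $r$-form is horizontal, hence lives in $\Lambda^r(\ker\eta)^*$, and $\ker\eta$ has rank $2n$.

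For the non-degenerate pairing, the natural candidate is the map induced on cohomology by
\begin{equation*}
([\alpha]_B,[\beta]_B)\longmapsto \int_M \eta\wedge\alpha\wedge\beta,
\end{equation*}
for $[\alpha]_B\in H^r_B$ and $[\beta]_B\in H^{2n-r}_B$, which is well defined because $d(\eta\wedge\alpha\wedge\beta)=d\eta\wedge\alpha\wedge\beta$ integrates appropriately once one uses that $\alpha,\beta$ are closed basic and that $\int_M d(\cdots)=0$ by Stokes. That this pairing descends to basic cohomology follows from the connection-form property of $\eta$ together with $\mathcal{L}_\xi$-invariance. Non-degeneracy is essentially a form of basic Poincar\'e duality for the transversally oriented foliation, and I expect this to be the main obstacle: the slickest justification is again through basic Hodge theory, where the symplectic or Riemannian Hodge star $\star:\Omega^r_{bas}\to\Omega^{2n-r}_{bas}$ furnishes, for each nonzero harmonic class, a partner that pairs nontrivially with it. The subtlety is that transverse Poincar\'e duality for a general Riemannian foliation can fail, so one must genuinely use the tautness coming from the $K$-contact metric (the Reeb field being Killing) to guarantee the duality holds. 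Since this Proposition is quoted directly from \cite[Sec.~7.2]{BG08}, in practice I would cite that reference for the non-degeneracy while reconstructing the long exact sequence explicitly as above.
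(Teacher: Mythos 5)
First, a point of comparison: the paper does not prove Proposition \ref{exact-sequence} at all --- it is quoted from \cite[Sec.~7.2]{BG08} --- so you are reconstructing an argument the paper treats as known. Your part (2) is essentially the standard route and is acceptable at the level of detail given: transverse ellipticity of the basic Laplacian for finite dimensionality, the vanishing of $\Omega^r_{bas}(M)$ for $r>2n$ because basic forms are horizontal, and tautness of the characteristic foliation (guaranteed here since $\int_M\eta\wedge(d\eta)^n\neq 0$) for the non-degeneracy of the transverse Poincar\'e pairing, which you rightly identify as the point where the $K$-contact hypothesis is indispensable and for which citing \cite{BG08} is legitimate. One small omission there: to see that your pairing descends you need that $d\eta\wedge\gamma\wedge\beta$ vanishes identically, which follows because it is a horizontal form of top degree $2n+1$ on a manifold whose horizontal distribution has rank $2n$.

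The genuine gap is in part (1): the sequence you write down, $0\to\Omega^{*-1}_{bas}(M)\xrightarrow{\eta\wedge}\Omega^{*}(M)\xrightarrow{\iota_\xi}\Omega^{*-1}_{bas}(M)\to 0$, is not a complex. For horizontal $\alpha$ one has $\iota_\xi(\eta\wedge\alpha)=(\iota_\xi\eta)\,\alpha-\eta\wedge\iota_\xi\alpha=\alpha$, so the composite of your two maps is the identity rather than zero; moreover $\iota_\xi$ does not carry all of $\Omega^*(M)$ into basic forms, since $\mathcal{L}_\xi(\iota_\xi\alpha)=\iota_\xi(\mathcal{L}_\xi\alpha)$ need not vanish for a general $\alpha$. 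The correct short exact sequence is $0\to\Omega^*_{bas}(M)\hookrightarrow\Omega^*(M)^\xi\xrightarrow{\iota_\xi}\Omega^{*-1}_{bas}(M)\to 0$, where $\Omega^*(M)^\xi$ denotes the $\mathcal{L}_\xi$-invariant forms and the first arrow is the inclusion (matching $i_*$ in the statement); the zig-zag then produces the sequence in the stated order, the connecting map being $[\beta]_B\mapsto[d(\eta\wedge\beta)]_B=[d\eta\wedge\beta]_B$, i.e.\ $\wedge[d\eta]$. This repair obliges you to justify that the inclusion $\Omega^*(M)^\xi\subset\Omega^*(M)$ induces an isomorphism on cohomology, which is done by averaging over the torus $T=\overline{\{\exp(t\xi)\}}$ inside the isometry group of a compatible metric --- and this averaging step is precisely where the $K$-contact hypothesis (the Reeb field is Killing) enters part (1). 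Your write-up omits it entirely, and without it the sequence is not available for a general contact form.
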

On a compact Sasakian manifold $M$, the following Hard Lefschetz theorem is due to El Kacimi-Alaoui \cite{ka90}.

\begin{theorem}(\cite{ka90})\label{trans-Kahler} Let $(M,\eta,g)$ be a $2n+1$ dimensional compact Sasakian manifold with a contact one form $\eta$ and a Sasakian metric $g$. Then $M$ satisfies the transverse Hard Lefschetz property.
\end{theorem}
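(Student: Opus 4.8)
The plan is to prove the transverse Hard Lefschetz property for a compact Sasakian manifold by invoking the transverse Kähler structure and transferring the classical Kähler Hard Lefschetz theorem to the basic cohomology. The key observation is that on a Sasakian manifold the one-dimensional characteristic foliation defined by the Reeb field $\xi$ is transversally Kähler: the transverse metric $g^T$ (the restriction of $g$ to $\ker\eta$) together with the transverse almost complex structure induced by $\Phi$ makes the normal bundle of the foliation into a Kähler structure, and $d\eta$ plays the role of the transverse Kähler form $\omega$. I would first make precise that the basic cohomology $H^*_B(M,\R)$ carries all the structure of the cohomology of a compact Kähler manifold of real dimension $2n$: a basic Hodge decomposition, a basic Hodge star, and the three operators $L$, $\Lambda$, $H$ satisfying the $sl(2)$ commutation relations \eqref{sl2-module-on-forms}.

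The central ingredient is El Kacimi-Alaoui's basic Hodge theory. On a compact manifold equipped with a transversally Kähler Riemannian foliation, one has a well-behaved basic Laplacian $\Delta_B$ acting on the finite-dimensional space of basic harmonic forms, and Proposition \ref{exact-sequence}(2) already guarantees that each $H^r_B(M,\R)$ is finite dimensional with Poincaré duality in the transverse dimension $2n$. I would use the basic Hodge decomposition to identify $H^*_B(M,\R)$ with the space of basic harmonic forms, and then verify that $L$, $\Lambda$, $H$ descend to operators on basic cohomology satisfying the same $sl(2)$ relations at the level of harmonic representatives. The crucial point is that the transverse Kähler identities — the basic analogues of the Kähler identities $[\Lambda,d]=-\delta^*$ and so forth — hold because the transverse structure is Kähler; this forces $L$ and $\Lambda$ to commute with $\Delta_B$ and hence to preserve basic harmonic forms.

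Once the $sl(2)$ action on the finite-dimensional graded vector space $H^*_B(M,\R)=\bigoplus_{k=0}^{2n}H^k_B(M,\R)$ is established and $H$ acts on $H^k_B$ as multiplication by $(n-k)$, the transverse Hard Lefschetz statement \eqref{Lefschetz-map}, namely that $L^{n-k}\colon H^k_B(M,\R)\to H^{2n-k}_B(M,\R)$ is an isomorphism for $0\le k\le n$, follows purely from the representation theory of $sl(2)$ on finite-dimensional modules. Indeed, in any finite-dimensional $sl(2)$-module the $(n-k)$-th power of the raising operator maps the $(n-k)$-weight space isomorphically onto the $-(n-k)$-weight space, which is exactly the assertion. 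This reduces the geometric theorem to a purely algebraic fact about the eigenvalues of $H$.

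I expect the main obstacle to be the analytic foundation: establishing that on the compact Sasakian manifold the transverse Kähler identities genuinely hold and that the basic Laplacian $\Delta_B$ enjoys a clean Hodge theory with the expected commutation relations $[\Delta_B,L]=[\Delta_B,\Lambda]=0$. This is precisely the delicate content of El Kacimi-Alaoui's work, where transverse ellipticity of $\Delta_B$ and the Kähler nature of the transverse geometry must be combined. Once that analytic input is granted, the remainder is the standard Kähler argument transplanted to basic forms, and I would simply cite \cite{ka90} for the basic Hodge theory and invoke the transverse Kähler identities to conclude.
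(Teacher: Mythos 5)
Your outline is correct and coincides with the approach the paper attributes to El Kacimi-Alaoui: the paper itself gives no proof of Theorem \ref{trans-Kahler}, citing \cite{ka90} and describing the result as ``an immediate consequence of the basic Hodge theory on basic forms,'' which is exactly the transverse-K\"ahler-identities plus $sl(2)$ representation-theory argument you sketch. The only caveat is that your proof is a reduction to the analytic content of \cite{ka90} (basic Hodge decomposition and the transverse K\"ahler identities) rather than a self-contained argument, but that is precisely how the paper treats this theorem as well.
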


More recently,  Cappelletti-Montano, De Nicola, and Yudin \cite{CNY13} established a Hard Lefschetz theorem for the De Rham cohomology group of a compact Sasakian manifold.

\begin{theorem}(\cite{CNY13}) \label{HLP-sasakian} Let $(M,\eta,g)$ be a $2n+1$ dimensional compact Sasakian manifold with a contact one form $\eta$ and a Sasakian metric $g$, and let $\Pi: \Omega^*(M)\rightarrow \Omega_{har}^*(M)$ be the projection onto the space of Harmonic forms. Then for any $0\leq k\leq n$, the map
\[Lef_k: H^{k}(M,\R)\rightarrow H^{2n+1-k}(M,\R), [\beta]\mapsto [\eta\wedge (d\eta)^{n-k}\wedge \Pi \beta]\]
is an isomorphism. Moreover, for any $[\beta]\in H^k(M,\R)$, and for any closed basic primitive $k$-form $\beta'\in [\beta]$, $[\eta\wedge (d\eta)^{n-k}\wedge \beta']=Lef_k([\beta])$. In particular, the Lefschetz map $Lef_k$ does not depend on the choice of a compatible Sasakian metric.

\end{theorem}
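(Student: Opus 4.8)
The plan is to prove the theorem through the odd-dimensional symplectic Hodge machinery of Section \ref{odd-sym-hodge} together with the Gysin sequence of Proposition \ref{exact-sequence}, rather than through the Riemannian spectral analysis of \cite{CNY13}. Since $(M,\eta,g)$ is compact Sasakian, Theorem \ref{trans-Kahler} guarantees that $M$ satisfies the transverse Hard Lefschetz property, so that the primitive decomposition (Theorem \ref{primitive-decomposition}), the existence of closed primitive representatives (Lemma \ref{Yan's-lemma}), and the symplectic $d\delta$-lemma (Theorem \ref{symplectic-ddelta}) are all at our disposal; here $\omega=d\eta$ and $\eta$ is a connection $1$-form. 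First I would use the Gysin sequence \eqref{Gysin} to pin down the De Rham groups in terms of basic data. For $0\le k\le n$, hard Lefschetz makes $L=\wedge[d\eta]$ injective on $H^{j}_B$ in degrees $j<n$, so \eqref{Gysin} degenerates to an isomorphism $i_*\colon H^k_B(M,\R)/L\,H^{k-2}_B(M,\R)\xrightarrow{\ \sim\ }H^k(M,\R)$; by Theorem \ref{primitive-decomposition} the left-hand side is $PH^k_B(M,\R)$. Combined with Lemma \ref{Yan's-lemma}, this shows every class $[\beta]\in H^k(M,\R)$ has a representative $\beta'$ that is a closed primitive basic $k$-form, which is exactly the representative appearing in the ``moreover'' clause.

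Then I would check that $Lef_k$ is well defined at the form level. For closed primitive basic $\beta'$ one computes $d(\eta\wedge\omega^{n-k}\wedge\beta')=\omega^{n-k+1}\wedge\beta'=L^{n-k+1}\beta'=0$, the last equality being the definition of primitivity; hence $\eta\wedge\omega^{n-k}\wedge\beta'$ is closed. To see that its class depends only on $[\beta]$, apply the Gysin map $j=\iota_\xi$. Using $\iota_\xi\eta=1$, $\iota_\xi\omega=0$ and $\iota_\xi\beta'=0$ one gets $\iota_\xi(\eta\wedge\omega^{n-k}\wedge\beta')=\omega^{n-k}\wedge\beta'=L^{n-k}\beta'$, so $j\bigl(Lef_k[\beta]\bigr)=L^{n-k}[\beta']_B$. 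Since two closed primitive basic representatives of $[\beta]$ agree in $H^k_B(M,\R)$, their images differ by $L^{n-k}(0)=0$; as the same analysis of \eqref{Gysin} in degree $2n+1-k$ shows $j$ is injective on $H^{2n+1-k}(M,\R)$, well-definedness follows.

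The isomorphism then drops out of the identity $j\circ Lef_k=L^{n-k}\circ\iota$, where $\iota\colon H^k(M,\R)\xrightarrow{\ \sim\ }PH^k_B(M,\R)$ is the isomorphism from the first paragraph. Running the Gysin sequence in degree $2n+1-k$ (again using that $L$ is onto in degrees $>n$) identifies $j\colon H^{2n+1-k}(M,\R)\xrightarrow{\ \sim\ }\ker\bigl(L\colon H^{2n-k}_B\to H^{2n+2-k}_B\bigr)$, and the $sl(2)$-structure forces this kernel to equal $L^{n-k}PH^k_B(M,\R)$. Since hard Lefschetz makes $L^{n-k}\colon PH^k_B\to L^{n-k}PH^k_B$ an isomorphism, all three arrows in $j\circ Lef_k=L^{n-k}\circ\iota$ are isomorphisms, whence $Lef_k$ is one.

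It remains to match this primitive-representative description with the harmonic description $[\beta]\mapsto[\eta\wedge\omega^{n-k}\wedge\Pi\beta]$, and this is where I expect the main obstacle to lie. The point is that $\eta\wedge\omega^{n-k}\wedge\Pi\beta$ is closed precisely when $\Pi\beta$ is primitive, so one must know that on a compact Sasakian manifold every harmonic form of degree $k\le n$ is basic and primitive; this is the genuine analytic input, provided by the transverse Hodge theory underlying Theorem \ref{trans-Kahler}. Granting it, $\Pi\beta$ is itself a legitimate closed primitive basic representative, so the two descriptions coincide by the well-definedness already established, and the independence of the compatible Sasakian metric is immediate since the primitive-representative formula never refers to $g$. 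The delicate part throughout is bookkeeping the interface between the two cohomology theories, De Rham versus basic, and ensuring the various Lefschetz and contraction operators commute with the maps in \eqref{Gysin} with the correct signs.
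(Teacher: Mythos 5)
First, a point of orientation: the paper does not actually prove Theorem \ref{HLP-sasakian} --- it is quoted from \cite{CNY13} --- and what the paper does prove (Lemmas \ref{tech-lemma1}, \ref{tech-lemma2} and Step 1 of Theorem \ref{main-result1}) is the metric-free reformulation via closed primitive basic representatives. Your argument for that part is essentially the paper's and is correct: your first paragraph is Lemma \ref{tech-lemma1}, and your identification of the image of $j_{2n+1-k}$ with $\ker\bigl(L\colon H^{2n-k}_B\to H^{2n+2-k}_B\bigr)=L^{n-k}PH^{k}_B(M,\R)$ gives the isomorphism where the paper instead invokes Poincar\'e duality and proves injectivity; both are fine. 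Your well-definedness argument is in fact cleaner than the paper's: you note that two closed primitive basic representatives of the same De Rham class already agree in $H^k_B(M,\R)$ (since $\ker i_*\cap PH^k_B(M,\R)=0$) and that $j_{2n+1-k}$ is injective under the transverse Hard Lefschetz property, which bypasses the symplectic $d\delta$-lemma and the form-level Lefschetz decomposition that the paper uses in Lemma \ref{tech-lemma2}.

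The genuine gap is in your last paragraph. The fact you need --- that on a compact Sasakian manifold every Riemannian-harmonic $k$-form with $k\le n$ is basic and primitive --- is not ``provided by the transverse Hodge theory underlying Theorem \ref{trans-Kahler}.'' El Kacimi-Alaoui's theorem concerns the basic Laplacian acting on basic forms and says nothing about harmonic forms for the full Laplacian of the Sasakian metric on $M$ itself; bridging the two requires Weitzenb\"ock-type curvature identities special to Sasakian metrics (horizontality and invariance of low-degree harmonic forms is Tachibana's theorem, and their effectiveness, i.e.\ primitivity, is a further nontrivial identity). That bridge is precisely the analytic core of the proof in \cite{CNY13}, and it is the one ingredient your argument assumes rather than establishes. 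Since the map $Lef_k$ in the statement is defined via the harmonic projection $\Pi$, what you have actually proved is the ``moreover'' clause and the metric-independence --- equivalently, the Hard Lefschetz property in the sense of Definition \ref{HLP-contact}, which is Theorem \ref{trans-Kahler} combined with Step 1 of Theorem \ref{main-result1} --- but not the assertion about $[\eta\wedge(d\eta)^{n-k}\wedge\Pi\beta]$ as stated.
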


This result motivates them to propose the following definition of the Hard Lefschetz property for a contact manifold.

\begin{definition}\label{HLP-contact} Let $(M,\eta)$ be a $2n+1$ dimensional compact contact manifold with a contact $1$-form $\eta$. For any $0\leq k\leq n$,
define the Lefschetz relation between the cohomology group $H^{k}(M,\R)$ and $H^{2n+1-k}(M,\R)$ to be
\begin{equation}\label{Lef-relation} \mathcal{R}_{Lef_k}=\{([\beta],[\eta\wedge L^{n-k}\beta])\,\vert \iota_{\xi}\beta=0, d\beta=0, L^{n-k+1}\beta=0\}.\end{equation}
If it is the graph of an isomorphism $Lef_k: H^{k}(M,\R)\rightarrow H^{2n+1-k}(M,\R)$ for any $0\leq k\leq n$, then
the contact manifold $(M,\eta)$ is said to have the hard Lefschetz property.

\end{definition}

\section{Hard Lefschetz theorem for K-contact manifolds}\label{main-result}

 Throughout this section, we assume $(M,\eta)$ to be a $2n+1$ dimensional compact $K$-contact manifold with a contact $1$-form $\eta$, and a Reeb vector field $\xi$. Set $\omega=d\eta$, and $\Omega=\eta\wedge\dfrac{\omega^n}{n!}$. Then $(M,\omega,\Omega)$ is an odd dimensional symplectic manifold in the sense of Definition \ref{odd-symplectic}. We will use extensively the machinery from odd dimensional symplectic Hodge theory as we explained in Section
\ref{odd-sym-hodge}.

\begin{lemma}\label{tech-lemma1}Let $(M,\eta)$ be a $2n+1$ dimensional compact $K$-contact manifold with a contact $1$-form $\eta$.  Assume that $M$ satisfies the transverse Hard Lefschetz property. Then for any $0\leq k\leq n$, the  map \[ i_*: H^k_B(M, \R)\rightarrow H^k(M,\R)\] is surjective; moreover, its image equals
\begin{equation} \label{image} \{i_*[\alpha]_B\,\vert\, \alpha \in \Omega^k_{bas}(M), d\alpha=0, \omega^{n-k+1}\wedge \alpha=0\}.\end{equation}
As a result, the restriction map $i_*: PH^k_{B}(M,\R)\rightarrow H^k(M,\R)$ is an isomorphism.
\end{lemma}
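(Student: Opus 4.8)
The plan is to analyze the long exact sequence (\ref{Gysin}) from Proposition \ref{exact-sequence} and use the transverse Hard Lefschetz property to control the connecting maps. The key observation is that in the portion
\[ H^{k-1}_B(M,\R)\xrightarrow{\wedge[d\eta]} H^{k+1}_B(M,\R)\xrightarrow{i_*} H^{k+1}(M,\R)\xrightarrow{j_{k+1}} H^{k}_B(M,\R)\xrightarrow{\wedge[d\eta]} H^{k+2}_B(M,\R), \]
the map $\wedge[d\eta]$ is nothing but the Lefschetz operator $L$ on basic cohomology. To prove surjectivity of $i_*: H^k_B(M,\R)\to H^k(M,\R)$, by exactness it suffices to show that $j_k: H^k(M,\R)\to H^{k-1}_B(M,\R)$ is the zero map, equivalently that the preceding arrow $L=\wedge[d\eta]: H^{k-2}_B(M,\R)\to H^k_B(M,\R)$ is surjective. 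First I would exploit the transverse Hard Lefschetz property together with the primitive decomposition of Theorem \ref{primitive-decomposition}: since $H^k_B(M,\R)=\bigoplus_r L^r PH_B^{k-2r}(M,\R)$, every basic class in degree $k\le n$ with $r\ge 1$ summand lies in the image of $L$, and the transverse HLP guarantees that the remaining primitive piece is also hit appropriately; the standard $sl(2)$ representation theory shows $L: H^{k-2}_B\to H^k_B$ is surjective precisely in the range $k\leq n$. This is where the hypothesis $0\leq k\leq n$ is essential.

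Once surjectivity of $i_*$ is established, I would identify its image. By exactness the image of $i_*: H^k_B\to H^k$ equals the kernel of $j_k$, but a cleaner route is to recall that $i_*$ factors through (or is closely tied to) the kernel of the next Lefschetz map. The claim is that the image consists exactly of classes representable by closed basic forms $\alpha$ satisfying $\omega^{n-k+1}\wedge\alpha=0$, i.e. primitive classes in the sense of Definition \ref{primitive1}, since $PH_B^k(M,\R)=\ker(L^{n-k+1}: H_B^k\to H_B^{2n-k+2})$. To see that every class in the image of $i_*$ is primitive, I would trace the exact sequence one step further: a basic class $[\alpha]_B$ maps to $i_*[\alpha]_B$, and by exactness at $H^{k+1}_B$ the composite $i_*\circ(\wedge[d\eta])=0$ shows that $L^{n-k}$ applied appropriately kills the image, forcing the primitivity condition $L^{n-k+1}[\alpha]_B=0$. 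Conversely, Lemma \ref{Yan's-lemma} tells me that every primitive basic cohomology class is represented by a closed primitive basic form, so the description (\ref{image}) in terms of representatives $\alpha$ with $d\alpha=0$ and $\omega^{n-k+1}\wedge\alpha=0$ is exactly the image of $PH^k_B(M,\R)$ under $i_*$.

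For the final assertion that $i_*: PH^k_B(M,\R)\to H^k(M,\R)$ is an isomorphism, it remains to prove injectivity on the primitive subspace. I would show that $\ker(i_*)\cap PH^k_B(M,\R)=0$ by analyzing exactness at $H^k_B$: an element of $\ker i_*$ lies in the image of $\wedge[d\eta]: H^{k-2}_B\to H^k_B$, i.e. in $L\cdot H^{k-2}_B$. But the primitive decomposition of Theorem \ref{primitive-decomposition} shows that $PH_B^k(M,\R)$ intersects the image of $L$ trivially, since the decomposition is a direct sum and the primitive summand sits in the $r=0$ component. Hence a primitive class in the kernel of $i_*$ must vanish, giving injectivity. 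Combined with the surjectivity onto the image (\ref{image}), which is precisely $i_*(PH^k_B)$, this yields the isomorphism.

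The main obstacle I anticipate is the careful bookkeeping required to match the connecting maps of the Gysin sequence (\ref{Gysin}) with the Lefschetz operator $L$ and to verify the primitivity of the image, since one must keep track of degree shifts and ensure that the $sl(2)$-theoretic surjectivity of $L$ in the range $k\leq n$ is applied in exactly the right spot. In particular, establishing that the image of $i_*$ is contained in (not merely contains) the primitive classes requires extracting the condition $\omega^{n-k+1}\wedge\alpha=0$ from exactness rather than from a direct Hodge-theoretic computation, and this is the step where the interplay between the long exact sequence and the transverse Hard Lefschetz property is most delicate.
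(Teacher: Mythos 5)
Your treatment of the image of $i_*$ and of the injectivity of $i_*$ on $PH^k_B(M,\R)$ is essentially the paper's argument: the primitive decomposition of Theorem \ref{primitive-decomposition} gives $H^k_B(M,\R)=PH^k_B(M,\R)\oplus L\,H^{k-2}_B(M,\R)$, exactness at $H^k_B(M,\R)$ gives $\ker i_*=\im L$, and together these yield $i_*(H^k_B)=i_*(PH^k_B)$ and $\ker i_*\cap PH^k_B=0$; Lemma \ref{Yan's-lemma} then converts this into the description (\ref{image}). That part is fine, although the sentence about "tracing the exact sequence one step further" to force primitivity is not needed once the direct sum decomposition is invoked.

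The surjectivity step, however, contains a genuine error. You assert that $j_k=0$ is equivalent to surjectivity of the arrow $\wedge[d\eta]:H^{k-2}_B(M,\R)\to H^k_B(M,\R)$, and then claim that $sl(2)$ theory makes this map surjective precisely for $k\le n$. Both claims are false. By exactness at $H^{k-1}_B(M,\R)$ one has $\im j_k=\ker\bigl(\wedge[d\eta]:H^{k-1}_B(M,\R)\to H^{k+1}_B(M,\R)\bigr)$, so $j_k=0$ is equivalent to \emph{injectivity} of the \emph{following} Lefschetz arrow $H^{k-1}_B\to H^{k+1}_B$ --- which does hold for $k\le n$, since the transverse Hard Lefschetz property makes $L^{n-k+1}$ injective on $H^{k-1}_B(M,\R)$. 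By contrast, surjectivity of $\wedge[d\eta]:H^{k-2}_B\to H^k_B$ would, by exactness at $H^k_B(M,\R)$, force $\ker i_*=H^k_B(M,\R)$, i.e.\ $i_*=0$, the opposite of what you are trying to prove; and in any case, under the Hard Lefschetz property the map $L:H^{j}_B\to H^{j+2}_B$ is surjective only for $j\ge n-1$, not in the range you claim (a Sasakian $5$-manifold with $b_2>1$ and $k=2$ is already a counterexample). The repair is short --- deduce from the transverse Hard Lefschetz property that $\wedge[\omega]:H^{i}_B(M,\R)\to H^{i+2}_B(M,\R)$ is injective for $0\le i\le n-1$ and conclude $j_k=0$, which is exactly what the paper does --- but as written your surjectivity argument does not go through.
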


\begin{proof} Consider the long exact sequence (\ref{Gysin}). By assumption, $M$ satisfies the transverse Hard Lefschetz property. Thus  the map \[H^{i}_B(M,\R)\xrightarrow{\wedge[\omega]} H_B^{i+2}(M,\R)\] is injective for any
$ 0\leq i \leq n-1$. It then follows from the exactness of the sequence (\ref{Gysin}) that the map

\[ i_*: H^{k}_B(M,\R)\rightarrow H^{k}(M,\R)\] is surjective for any  $ 0\leq k\leq n$. This proves the first assertion in Lemma \ref{tech-lemma1}.

Since $M$ satisfies the transverse Hard Lefschetz property, by Theorem \ref{primitive-decomposition},
\[ H^{k}_B(M,\R)= PH^k_B(M)\oplus L H^{k-2}_B(M,\R).\]

It is clear from the exactness of the sequence (\ref{Gysin}) that
\[i_*\left(H^k_B(M,\R)\right)=i_*\left(PH^k_B(M,\R)\right),\,\,\,\text{ker} i_* \cap PH^k(M,\R)=0.\]
Therefore the restriction map $i_*:PH^k_B(M,\R)\rightarrow H^k(M,\R)$ is an isomorphism. Now applying Lemma \ref{Yan's-lemma}, it follows immediately that  $i_*\left( H^k_B(M,\R)\right)$ equals (\ref{image}). This completes the proof of Lemma \ref{tech-lemma1}.

\end{proof}

\begin{remark} The result proved in Lemma \ref{tech-lemma1} is known to hold for compact Sasakian manifolds, c.f. \cite[Prop. 7.4.13]{BG08}.
The traditional proof uses Riemannian Hodge theory associated to a compatible Sasakian metric.
\end{remark}

 We are ready to define the Lefschetz map on the cohomology groups. In \cite{CNY13}, such maps are introduced using Riemannian Hodge theory associated to a compatible Sasakian metric. In contrast, we define these maps here using the symplectic Hodge theory on the space of basic forms.

  For any $ 0\leq k\leq n$, define $Lef_k : H^{k}(M,\R)\rightarrow H^{2n+1-k}(M,\R)$ as follows.  For any cohomology class
$[\gamma] \in H^{k}(M,\R)$, by Lemma \ref{tech-lemma1} there exists a closed primitive basic $k$-form $\alpha \in \mathcal{P}_{bas}^k(M)$ such that $i_*[\alpha]_B=[\gamma]$. Observe that $d \left( \eta\wedge L^{n-k}\wedge \alpha\right)= L^{n-k+1} \alpha=0$. We define \begin{equation}\label{main-map} Lef_k[\gamma]= [\eta\wedge L^{n-k} \alpha].\end{equation}

\begin{lemma}\label{tech-lemma2} Assume that $M$ satisfies the transverse Hard Lefschetz property. Then the map (\ref{main-map}) does not depend on the choice of closed primitive basic forms.
\end{lemma}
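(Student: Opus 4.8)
The plan is to show that if $\alpha, \alpha' \in \mathcal{P}_{bas}^k(M)$ are two closed primitive basic $k$-forms with $i_*[\alpha]_B = i_*[\alpha']_B = [\gamma]$, then $[\eta \wedge L^{n-k}\alpha] = [\eta \wedge L^{n-k}\alpha']$ in $H^{2n+1-k}(M,\R)$. Setting $\beta = \alpha - \alpha'$, this is a closed primitive basic $k$-form with $i_*[\beta]_B = 0$, so it suffices to prove that $i_*[\beta]_B = 0$ forces $\eta \wedge L^{n-k}\beta$ to be exact. First I would use the exactness of the Gysin sequence (\ref{Gysin}): since $i_*[\beta]_B = 0$ and $i_*$ is preceded in the sequence by $\wedge[\omega] = \wedge[d\eta]$, the class $[\beta]_B$ lies in the image of $\wedge[d\eta] : H^{k-2}_B(M,\R) \to H^k_B(M,\R)$. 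Thus $[\beta]_B = [\omega \wedge \sigma]_B$ for some closed basic $(k-2)$-form $\sigma$, i.e. $\beta = \omega \wedge \sigma + d\tau$ for some basic $(k-1)$-form $\tau$.

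The key tension is that $\beta$ is \emph{primitive} (so $\Lambda\beta = 0$ by Lemma \ref{yan's-result}) yet cohomologous in $H^k_B$ to something in the image of $L$. The natural way to exploit this is through the transverse Hard Lefschetz property together with the symplectic Hodge theory from Section \ref{odd-sym-hodge}. By Mathieu's theorem (Theorem \ref{Mathieu's-theorem}), the class $[\beta]_B$ has a symplectic harmonic representative; combined with the uniqueness of the Lefschetz decomposition at the cohomology level (Theorem \ref{primitive-decomposition}) and the fact that $[\beta]_B \in PH^k_B(M,\R) \cap L\,H^{k-2}_B(M,\R) = 0$, I expect to conclude $[\beta]_B = 0$ in $H^k_B(M,\R)$ outright. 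Indeed, $\beta$ primitive and closed represents a primitive basic cohomology class, while lying in the image of $L$ means its primitive component in degree $k$ vanishes; by directness of the sum these cannot coincide unless the class is zero. Hence $\beta = d\tau$ for some basic $(k-1)$-form $\tau$.

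It then remains to show $\eta \wedge L^{n-k}\beta = \eta \wedge L^{n-k} d\tau$ is exact in $\Omega^{2n+1-k}(M)$. Here I would compute directly: since $L$ commutes with $d$ (as $\omega = d\eta$ is closed, wedging with $\omega^{n-k}$ commutes with $d$ on basic forms), we have $L^{n-k} d\tau = d(L^{n-k}\tau)$, and then
\[
\eta \wedge d(L^{n-k}\tau) = -d\bigl(\eta \wedge L^{n-k}\tau\bigr) + d\eta \wedge L^{n-k}\tau = -d\bigl(\eta \wedge L^{n-k}\tau\bigr) + L^{n-k+1}\tau.
\]
Since $\tau$ is a basic $(k-1)$-form, its primitive degree exceeds $n$ only in the top pieces, and the term $L^{n-k+1}\tau = \omega^{n-k+1}\wedge \tau$ must be handled using $L^{n-(k-1)+1} = L^{n-k+2}$ annihilation of primitive components; decomposing $\tau$ via Lemma \ref{yan's-result} and tracking which summands survive $L^{n-k+1}$, I would show $L^{n-k+1}\tau$ is itself exact (it equals $\eta \wedge$ something exact plus a further $d$-exact correction, or vanishes on primitive pieces of low degree). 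Modulo this bookkeeping, $\eta \wedge L^{n-k}\beta$ is $d$-exact.

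The main obstacle I anticipate is the final computation: controlling the term $L^{n-k+1}\tau$ where $\tau$ is only known to be an arbitrary basic primitive-decomposable $(k-1)$-form, not primitive itself. The cleanest route is probably to invoke the symplectic $d\delta$-lemma (Theorem \ref{symplectic-ddelta}) to choose the bounding form $\tau$ more carefully — for instance, arranging $\beta = d\delta\rho$ or selecting a representative within $\im d \cap \ker\delta = \im d\delta$ — so that the commutator relations of Lemma \ref{commutator} (especially $[d\delta, L] = 0$) let the $L^{n-k}$ factor pass cleanly through the exactness and produce a genuine primitive on which $L^{n-k+1}$ vanishes. Getting the degrees and primitive components to line up so that every leftover term is manifestly exact is the delicate part; everything else follows formally from the exact sequence and the Lefschetz decomposition.
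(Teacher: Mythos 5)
Your reduction to showing that $\eta\wedge L^{n-k}\beta$ is exact, where $\beta=\alpha-\alpha'$ is a closed primitive basic $k$-form with $i_*[\beta]_B=0$, is correct, and your shortcut $[\beta]_B\in PH^k_B(M,\R)\cap L\,H^{k-2}_B(M,\R)=0$ (via the directness of the primitive decomposition) is a legitimate, slightly cleaner variant of what the paper does. The gap is at the decisive final step. Writing $\beta=d\tau$ for a generic basic $(k-1)$-form $\tau$ and computing $\eta\wedge L^{n-k}d\tau=L^{n-k+1}\tau-d(\eta\wedge L^{n-k}\tau)$ leaves you needing $L^{n-k+1}\tau$ to be exact, and no bookkeeping with the Lefschetz decomposition of an arbitrary $\tau$ can deliver this: the degree-$(k-1)$ primitive component of $\tau$ is annihilated only by $L^{n-k+2}$, not by $L^{n-k+1}$, and the very identity you derived shows that $L^{n-k+1}\tau$ is exact if and only if $\eta\wedge L^{n-k}\beta$ is --- so this route is circular. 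The $d\delta$-lemma is therefore not an optional clean-up at the end; it is the proof.

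To close the gap along the lines you gesture at (which is exactly the paper's argument): first verify that $\beta$ is $\delta$-closed --- this follows from $\delta=[d,\Lambda]$ applied to a form that is both closed and primitive --- so that $\beta\in\im d\cap\ker\delta=\im d\delta$ and $\beta=d\delta\rho$ for some basic $k$-form $\rho$. Next, Lefschetz-decompose $\rho$ and use $[d\delta,L]=0$ together with $[d\delta,\Lambda]=0$ (so $d\delta$ preserves primitivity) and the uniqueness of the Lefschetz decomposition to conclude that $\beta=d\delta\rho_k$ with $\rho_k$ primitive of degree $k$. Finally, $\eta\wedge L^{n-k}d\delta\rho_k=L^{n-k+1}\delta\rho_k-d(\eta\wedge L^{n-k}\delta\rho_k)$, and repeated use of $[\delta,L]=d$ moves $\delta$ past $L^{n-k+1}$ at the cost of $d$-exact terms, leaving $\delta L^{n-k+1}\rho_k=0$ because $\rho_k$ is primitive. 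Only at this point is $\eta\wedge L^{n-k}\beta$ exhibited as exact; as written, your proposal names the right tools but does not carry out this step, and its stated fallback would fail.
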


\begin{proof} Suppose that there are two closed primitive basic $k$-forms $\alpha_1$ and $\alpha_2$ such that
$i_*[ \alpha_1 ]_B=i_*[\alpha_2]_B\in H^{k}(M,\R)$. It follows from the exactness of the sequence (\ref{Gysin}) that
$ [\alpha_1]_B=[\alpha_2]_B +L [\beta]_B$ for some
closed basic $(k-2)$-form $\beta$. Since $M$ satisfies the transverse Hard Lefschetz property, by Theorem
\ref{Mathieu's-theorem} one may well assume that $\beta$ is symplectic Harmonic.

Therefore,  $\alpha_1-\alpha_2 -L \beta$ is both $d$-exact and $\delta$-closed. By Theorem \ref{symplectic-ddelta}, the symplectic $d\delta$-lemma,
there exists a basic $k$-form $\varphi$ such that \begin{equation} \label{difference} \alpha_1-\alpha_2 -L \beta= d\delta \varphi \end{equation}

Lefschetz decompose $\beta$ and $\varphi$ as follows.
\[ \begin{split}  &\beta=\beta_{k-2}+L\beta_{k-4}+L^2\beta_{k-6}+\cdots \\
& \varphi=  \varphi_{k}+L\varphi_{k-2}+L^2\varphi_{k-4}+\cdots
\end{split} \] Here $\varphi_{k-i}\in \mathcal{P}_{bas}^{k-i}(M)$, $i=0, 2,\cdots$,  and $\beta_{k-i}\in \mathcal{P}_{bas}^{k-i}(M)$, $i=2,4,\cdots$.
Since $d\delta$ commutes with $L$, it follows from (\ref{difference}) that
\[ \alpha_1-\alpha_2=d\delta \varphi_k +L(\beta_{k-2}+d\delta \varphi_{k-2})+L^2(\beta_{k-4}+d\delta\varphi_{k-4})\cdots  .\]

Since $d\delta$ commutes with $\Lambda$, $d\delta$ maps primitive forms to primitive forms. It then follows from the uniqueness of the Lefschetz decomposition that
\[\alpha_1-\alpha_2=d\delta \varphi_k.\]

Observe that \[\begin{split} \eta\wedge \left(\omega^{n-k}\wedge (\alpha_1-\alpha_2)\right)&=
\eta\wedge \left(\omega^{n-k}\wedge d\delta\varphi_k\right)\\&=
-d\left( \eta\wedge \omega^{n-k}\wedge \delta\varphi_k\right)+ \left(L^{n-k+1} \delta\varphi_k\right).
\end{split}\]

Now using the commutator relation $[L, \delta]=-d$ repeatedly, it is clear that $L^{n-k+1} \delta\varphi_k$
must be $d$-exact, since $\varphi_k$ is a primitive $k$-form and so $L^{n-k+1}\varphi_{k}=0$. It follows immediately
that $\eta\wedge L^{n-k} (\alpha_1-\alpha_2)$ must be $d$-exact. This completes the proof of Lemma \ref{tech-lemma2}.

\end{proof}

\begin{theorem} \label{main-result1}Let $M$ be a $2n+1$ dimensional compact $K$-contact manifold with a contact one form $\eta$. Then it satisfies the Hard Lefschetz property if and only if it satisfies the transverse Hard Lefschetz property.
\end{theorem}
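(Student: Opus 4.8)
The plan is to prove both implications by extracting from the long exact Gysin sequence \eqref{Gysin} precisely the information needed to compare the De Rham Lefschetz map $Lef_k$ with the transverse Lefschetz maps $L^{n-k}\colon H^k_B\to H^{2n-k}_B$. The forward direction (transverse HLP $\Rightarrow$ HLP) is essentially assembled from the machinery already built: Lemma \ref{tech-lemma1} gives that $i_*\colon PH^k_B(M,\R)\to H^k(M,\R)$ is an isomorphism, and Lemma \ref{tech-lemma2} shows that the map $Lef_k$ in \eqref{main-map} is well defined, independent of the choice of closed primitive basic representative. So first I would verify that this well-defined $Lef_k$ coincides with the Lefschetz relation $\mathcal{R}_{Lef_k}$ of Definition \ref{HLP-contact}, i.e.\ that its graph is exactly that relation; this is just a matter of matching definitions, since every closed primitive basic $k$-form $\beta$ with $L^{n-k+1}\beta=0$ produces the pair $([\beta],[\eta\wedge L^{n-k}\beta])$.

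The substantive content of the forward direction is then to prove that $Lef_k\colon H^k(M,\R)\to H^{2n+1-k}(M,\R)$ is an isomorphism. I would do this by factoring it through the transverse structures. On the source side, $i_*\colon PH^k_B\to H^k(M,\R)$ is an isomorphism by Lemma \ref{tech-lemma1}, so $\dim H^k(M,\R)=\dim PH^k_B(M,\R)$. On the target side, I expect a dual description: using the exact sequence in the degree $2n+1-k$ together with the nondegenerate Poincar\'e-type pairing on basic cohomology from Proposition \ref{exact-sequence}(2), one identifies $H^{2n+1-k}(M,\R)$ with a quotient/subspace of basic cohomology of complementary dimension. The transverse HLP guarantees $L^{n-k}\colon PH^k_B\xrightarrow{\ \sim\ } $ (primitive part of $H^{2n-k}_B$) is an isomorphism, and wedging with $\eta$ carries $\omega^{n-k}\wedge\alpha$ into the correct De Rham class. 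The cleanest route is a dimension count combined with injectivity: if $Lef_k[\gamma]=0$ with $\gamma=i_*[\alpha]_B$, $\alpha$ closed primitive, then $\eta\wedge L^{n-k}\alpha=d\theta$; pairing this against basic classes and using that $[\alpha]_B$ is primitive together with the transverse isomorphism $L^{n-k}$ should force $[\alpha]_B=0$, hence $[\gamma]=0$. Equality of dimensions of source and target then upgrades injectivity to isomorphism.

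The converse (HLP $\Rightarrow$ transverse HLP) is where I expect the main obstacle. Here I may not assume Lemmas \ref{tech-lemma1}, \ref{tech-lemma2} or Theorems \ref{Mathieu's-theorem}, \ref{symplectic-ddelta}, since those were proved \emph{under} the transverse HLP hypothesis. So I must run the Gysin sequence \eqref{Gysin} in reverse: the assumption that $Lef_k$ is an isomorphism for all $0\le k\le n$ must be converted, via the sequence, into the statement that $\wedge[\omega]\colon H^i_B\to H^{i+2}_B$ is injective for $0\le i\le n-1$, which by the nondegenerate pairing of Proposition \ref{exact-sequence}(2) is equivalent to $L^{n-k}\colon H^k_B\to H^{2n-k}_B$ being an isomorphism. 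The delicate point is that the definition of $Lef_k$ via the relation $\mathcal{R}_{Lef_k}$ presupposes the relation is the graph of a map; being an isomorphism in particular forces the relation to be single-valued and everywhere-defined, and I expect to read off from single-valuedness that $i_*$ is injective on primitives (equivalently $\ker i_*\cap PH^k_B=0$) and from surjectivity onto $H^{2n+1-k}$ that the connecting maps in the sequence have the right rank.

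Concretely, I would argue by downward induction on $k$, or equivalently analyze the six-term segments of \eqref{Gysin} around degrees $k$, $k+1$ and $2n-k$, $2n+1-k$. Surjectivity of $Lef_k$ combined with exactness should yield surjectivity of $\wedge[\omega]\colon H^{k-1}_B\to H^{k+1}_B$ in the appropriate range, while injectivity of $Lef_k$ yields injectivity of the dual map; Poincar\'e duality on basic cohomology (Proposition \ref{exact-sequence}(2)) ties these together to give that every $L^{n-k}$ is an isomorphism, which is exactly the transverse HLP of Definition \ref{odd-HLP}. The trickiest bookkeeping will be confirming that the $\eta\wedge(-)$ factor in the definition of $Lef_k$ interacts correctly with the boundary map $\wedge[d\eta]=\wedge[\omega]$ in the Gysin sequence, so that an isomorphism downstairs in De Rham cohomology is genuinely equivalent to, and not merely implied by, the transverse isomorphism upstairs. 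I anticipate this equivalence hinges on the fact that $\eta$ is a connection $1$-form, so that $d\eta=\omega$ is basic and the relation \eqref{Lef-relation} factors precisely through the primitive basic forms appearing in the exact sequence.
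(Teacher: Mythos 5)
Your forward direction (transverse HLP implies HLP) is essentially the paper's argument: Lemma \ref{tech-lemma1} and Lemma \ref{tech-lemma2} give a well-defined $Lef_k$ on closed primitive basic representatives, injectivity follows by testing the exact form $\eta\wedge L^{n-k}\alpha$ against basic cohomology (the paper applies the map $j_{2n+1-k}$ induced by $\iota_{\xi}$ to conclude $[L^{n-k}\alpha]_B=0$, which is the same idea), and Poincar\'e duality on $M$ upgrades injectivity to an isomorphism. That half is sound.

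The converse is where your proposal has a genuine gap. First, a concretely incorrect step: you assert that surjectivity of $Lef_k$ together with exactness ``should yield surjectivity of $\wedge[\omega]\colon H^{k-1}_B\to H^{k+1}_B$.'' By exactness of (\ref{Gysin}) at $H^{k+1}_B$, surjectivity of that map is equivalent to the vanishing of $i_*\colon H^{k+1}_B\to H^{k+1}(M,\R)$, which is false in general; what is actually needed is injectivity of $\wedge[\omega]$ below the middle degree, and even that is not by itself equivalent to the transverse HLP, because $L^{n-k}$ is a composition of maps some of which live above the middle degree. More fundamentally, rank bookkeeping in the Gysin sequence cannot by itself recover the transverse HLP from the HLP; the multiplicative structure must enter. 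The mechanism the paper uses, and which is absent from your sketch, is the following: (i) since $\mathcal{R}_{Lef_k}$ is the graph of an everywhere-defined map, $i_*$ is surjective on classes of closed primitive basic forms; (ii) the tail of (\ref{Gysin}) shows $j_{2n+1}\colon H^{2n+1}(M,\R)\to H^{2n}_B(M,\R)$ is an isomorphism; (iii) for a closed primitive basic $\alpha$ with $L^{n-k}[\alpha]_B=0$ one computes $j_{2n+1}([\eta\wedge L^{n-k}\alpha\wedge\beta])=L^{n-k}[\alpha]_B\wedge[\beta]_B=0$ for every closed primitive basic $\beta$, hence $Lef_k(i_*[\alpha]_B)\cup i_*[\beta]_B=0$; Poincar\'e duality on $M$ together with the assumed bijectivity of $Lef_k$ then force $i_*[\alpha]_B=0$, so $[\alpha]_B\in\text{im}\,L$ by exactness; (iv) an induction on $k$ closes the argument, and along the way one must establish $H^k_B(M,\R)=PH^k_B(M,\R)+\text{im}\,L$ from the inductive hypothesis, since the primitive decomposition of Theorem \ref{primitive-decomposition} is itself only available under the transverse HLP --- a caveat that also undercuts your implicit appeal to primitive decompositions in this direction. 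Without steps (ii)--(iv) your converse does not go through.
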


\begin{proof} {\bf Step 1.} \,Assume that $M$ satisfies the transverse Hard Lefschetz property. We show that $M$ satisfies the Hard Lefschetz property. Since $M$ is oriented and compact, in view of the Poincar\'e duality, it suffices to show that for any $0\leq k\leq n$, the map given in (\ref{main-map}) is injective.

Suppose that $Lef_k[\gamma]=[\eta\wedge L^{n-k} \alpha]=0$, where $\alpha \in \mathcal{P}_{bas}^k(M)$ such that $d\alpha=0$,  $i_*[\alpha]_B=[\gamma]$.
Since the group homomorphism $j_{2n+1-k}:H^{2n+1-k}(M,\R)\rightarrow H_B^{2n-k}(M,\R)$ is induced by $\iota_{\xi}$, it follows that
\[0=j_{2n+1-k}(0)=j_{2n+1-k}([\eta\wedge (L^{n-k} \alpha)])=[ L^{n-k} \alpha]_B.\]

Since $M$ has the transverse Hard Lefschetz property, $[\alpha]_B=0$. Thus $[\gamma]=i_*([\alpha]_B)=0$.

{\bf Step 2.}\, Assume that $M$ satisfies the Hard Lefschetz property. We show that for any $0\leq k\leq n$, the map
\begin{equation}\label{induction}  L^{n-k}: H^k_B(M)\rightarrow H_B^{2n-k}(M),\,\,\,[\alpha]_B\mapsto [\omega^{n-k}\wedge \alpha]_B\end{equation} is an isomorphism by induction on $k$. By Part 2) in Proposition \ref{exact-sequence}, it suffices to show that for any $0\leq k\leq n$, the map
(\ref{induction}) is injective.

By assumption, for any $0\leq k\leq n$,
\[ \mathcal{R}_{Lef_k}=\{([\beta],[\eta\wedge L^{n-k}\beta])\,\vert \iota_{\xi}\beta=0, d\beta=0, L^{n-k+1}\beta=0\}\]
is the graph of an isomorphism $Lef_k: H^k(M)\rightarrow H^{2n-k+1}(M)$.

For $0\leq k\leq n$, consider the map $i_*: PH^k_B(M,\R)\rightarrow H^k(M,\R)$. Since $\mathcal{R}_{Lef_k}$ is the graph of a map,  one sees that $i_*$ must be surjective. Furthermore, when $k=0,1$, for simple dimensional reasons,   $PH_B^k(M,\R)=H^k_B(M,\R)$ and that
the map $i_*: PH^k_B(M,\R)\rightarrow H^k(M,\R)$ is an isomorphism.

Now consider the long exact sequence (\ref{Gysin}) at stage $2n+1$. Since $H_B^{i}(M,\R)=0$ when $i\geq 2n+1$, we have that
\begin{equation}\label{Gysin-final-stage}\cdots \rightarrow 0 \xrightarrow{i_*} H^{2n+1}(M, \R)\xrightarrow{j_{2n+1}} H^{2n}_B(M,\R)\xrightarrow{\wedge[\omega]} 0\rightarrow  \cdots
\end{equation}

It follows that the map $j_{2n+1}: H^{2n+1}(M,\R)\rightarrow H_B^{2n}(M,\R)$ is an isomorphism.

Suppose that there is $[\alpha]_B\in PH^k_B(M,\R)$ such that  $L^{n-k}[\alpha]_B=0\in H^{2n-k}_B(M,\R)$. By Lemma \ref{Yan's-lemma}, we may assume that $\alpha$ is a closed primitive basic $k$-form. Then for any closed primitive basic $k$-form $\beta$,
\[ j_{2n+1}([\eta\wedge L^{n-k}\alpha \wedge \beta])= L^{n-k}[\alpha]_B\wedge [\beta]_B=0.\]

Since $j_{2n+1}$ is an isomorphism, it follows that $Lef_k(i_*[\alpha]_B) \cup i_*[\beta]_B=[\eta\wedge L^{n-k}\alpha \wedge \beta]=0$.  Since $\beta$ is arbitrarily chosen, by the Poincar\'e duality, we must have $Lef_k[i_*[\alpha])=0$. Since $Lef_k$ is an isomorphism, $i_*[\alpha]=0$. By the exactness of the sequence (\ref{Gysin}), $[\alpha]_B=L[\lambda]_B$ for some $[\lambda]_B\in H_B^{k-2}(M,\R)$.
For dimensional considerations, when $k=0,1$, we must have $[\alpha]_B=0$.
This proves that the map (\ref{induction}) is an isomorphism when $k=0,1$.

Assume that the map (\ref{induction}) is an isomorphism for any non-negative integer less than $k$.
 We first observe that the inductive hypothesis implies $H^k_B(M,\R)=PH^k_B(M,\R)+\text{im}\,L$. Indeed, by the inductive hypothesis, $L^{n-k+2}: H_B^{k-2}(M)\rightarrow H_B^{2n-k+2}(M)$ is an isomorphism. Therefore, for any $ [\varphi]_B\in H_B^{k}(M)$, \[L^{n-k+1}[\varphi]_B=L^{n-k+2}[\sigma]_B\] for some
$[\sigma]_B\in H_B^{k-2}(M)$. As a result, $L^{n-k+1}\left([\varphi]_B-L[\sigma]_B\right)=0$. This implies that
$[\varphi]_B-L[\sigma]_B\in PH^k_B(M)$ and so $[\varphi]_B\in PH_B^k(M,\R)+\text{im}\, L$.


Now suppose that $L^{n-k}([\alpha]_B+L[\sigma]_B)=0$, where $[\alpha]_B\in PH_B^{k}(M,\R)$ and $[\sigma]\in H_B^{k-2}(M,\R)$. Then we must have
$L^{n-k+1}([\alpha]_B+L[\sigma]_B)=L^{n-k+2}[\sigma]_B=0$. It follows from our inductive hypothesis again that
$[\sigma]_B=0$. As a result, $L^{n-k}[\alpha]_B=0$. By our previous work, we must have that $[\alpha]_B=L[\beta]_B$ for some $[\beta]_B\in H^{k-2}_B(M,\R)$.
Thus $L^{n-k+1}[\beta]_B=0$. By our inductive hypothesis again, we have that $[\beta]_B=0$ and so $[\alpha]_B=L[\beta]_B=0$.
This completes the proof that the map (\ref{induction}) is an isomorphism for any $0\leq k\leq n$.

\end{proof}

 The following result is an immediate consequence of Theorem \ref{main-result1}, which asserts that for a compact Sasakian manifold, the two existing versions of Hard Lefschetz theorems in the literature (c.f. \cite{ka90}, \cite{CNY13}) are mathematically equivalent to each other.

\begin{corollary} Assume that $M$ is a compact Sasakian manifold. Then the following two statements are equivalent to each other.

 \begin{itemize} \item [1)] $M$ satisfies the Hard Lefschetz property (as given in Definition \ref{HLP-contact}).
 \item [2)] $M$ satisfies the transverse Hard Lefschetz property. \end{itemize} \end{corollary}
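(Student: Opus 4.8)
The plan is to deduce this Corollary directly from Theorem \ref{main-result1}, since the only genuine content---the equivalence of the two Lefschetz properties---has already been established there for the broader class of compact $K$-contact manifolds. The single point that needs to be observed is that a compact Sasakian manifold falls within the scope of Theorem \ref{main-result1}.

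First I would recall the definition of a Sasakian structure given in Section \ref{review-contact}: a Sasakian manifold $(M,\eta,\Phi,g)$ is by definition a $K$-contact manifold whose metric cone $(C(M),g_C)$ is K\"ahler. In particular, every compact Sasakian manifold is automatically a compact $K$-contact manifold equipped with the contact one-form $\eta$. Moreover, both notions appearing in the statement---the Hard Lefschetz property of Definition \ref{HLP-contact} and the transverse Hard Lefschetz property---are formulated purely in terms of the contact data $(M,\eta)$ and the associated basic cohomology, and make no reference to the Sasakian metric beyond what is already encoded in $\eta$. Hence both statements 1) and 2) are meaningful for $M$ viewed simply as a compact $K$-contact manifold.

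With this observation in place, the equivalence of 1) and 2) is nothing but the assertion of Theorem \ref{main-result1} applied to $M$, and the proof is complete. There is no real obstacle here: the entire weight of the argument sits in Theorem \ref{main-result1}, whose proof combines the long exact sequence (\ref{Gysin}) with the odd-dimensional symplectic Hodge theory reviewed in Section \ref{odd-sym-hodge}. For emphasis I would close by recording the conceptual payoff: since El Kacimi-Alaoui's Theorem \ref{trans-Kahler} already guarantees that statement 2) holds for every compact Sasakian manifold, the equivalence shows that statement 1)---that is, the Hard Lefschetz theorem of Cappelletti-Montano, De Nicola, and Yudin (Theorem \ref{HLP-sasakian})---follows at once, and conversely. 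The two versions of the Hard Lefschetz theorem in the literature are thus revealed to be mathematically equivalent.
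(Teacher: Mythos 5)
Your proposal is correct and matches the paper exactly: the corollary is stated there as an immediate consequence of Theorem \ref{main-result1}, obtained precisely by noting that a compact Sasakian manifold is in particular a compact $K$-contact manifold to which that theorem applies. The closing remark about Theorem \ref{trans-Kahler} is accurate context but not needed for the equivalence itself.
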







\section{Simply-connected $K$-contact manifolds without Sasakian structures}\label{K-contact-examples}

It is well known that Boothby-Wang construction provides important examples of $K$-contact manifolds. In this section, we apply Theorem \ref{main-result1} to a Boothby-Wang fibration over a weakly Lefschetz symplectic manifold, and construct examples of simply-connected $K$-contact manifolds which do not support any Sasakian structures in any dimension $\geq 9$.

The notion of a weakly Lefschetz symplectic manifold was introduced in \cite{FMU04} and \cite{FMU07}.  A $2n$ dimensional symplectic manifold $(X,\sigma)$ is said to satisfy the $s$-Lefschetz property, where  $0\leq s\leq n-1$,  if for any $0\leq k\leq s$, the Lefschetz map
\[ L^{n-k}: H^k(X,\R)\rightarrow H^{2n-k}(X,\R),\,\,\,\, [\alpha]\mapsto [\omega^{n-k}\wedge \alpha] \]
is surjective.  In particular,  when $s=n-1$, we say that $(X,\sigma)$ satisfies the Hard Lefschetz property. The following result gives an useful criterion on when a $2n$ dimensional symplectic manifold is $s$-Lefschetz.

\begin{proposition}(\cite[Prop. 2.5]{FMU07})\label{criterion-s-lef} Let $(M,\omega)$ be a $2n$ dimensional symplectic manifold, and let $0\leq s\leq n-1$. Then  $(M,\omega)$ is $s$-Lefschetz if and only if for every $0\leq k\leq s$, any cohomology class in $H^{2n-k}(M,\R)$ has a harmonic representative.
\end{proposition}

Applying Proposition \ref{criterion-s-lef}, we prove a simple lemma on when a product symplectic manifold is $s$-Lefschetz.

\begin{lemma} \label{prod-s-lef} Let $(M_1,\omega_1)$ and $(M_2,\omega_2)$ be two symplectic manifold of dimension $2p$ and $2n-2p$ respectively, $0\leq p\leq n$, and let $(M,\omega)$ be the product symplectic manifold $(M_1\times M_2,\omega_1\times \omega_2)$. The following statements hold.
\begin{itemize}
\item [a)]  Let $\alpha_1$,$\alpha_2$ be harmonic forms on $(M_1,\omega_1)$ and $(M_2,\omega_2)$ respectively. Then $\alpha_1\wedge \alpha_2$ is a harmonic form on $(M,\omega)$.

\item [b)] If $(M_1,\omega_1)$ satisfies the $s$-Lefschetz property, $0\leq s\leq p-1$, and if $(M_2,\omega_2)$ satisfies the Hard Lefschetz property, then $(M,\omega)$ satisfies the $s$-Lefschetz property.

\end{itemize}

\end{lemma}

\begin{proof}  a) is an easy consequence of \cite[Lemma 1.4]{Yan96}. In view of Proposition \ref{criterion-s-lef}, to prove b) it suffices to show that for a fixed integer $0\leq k\leq s$, any cohomology class in $H^{2n-k}(M,\R)$ has a harmonic representative.  By the K\"unneth formular,  we have that
\[ H^{2n-k}(M,\R)= \bigoplus_{i+j=2n-k}H^{i}(M_1,\R)\otimes H^j(M_2,\R).\]
Let $i$ and $j$ be a pair of non-negative integers such that $i+j=2n-k$. Then we have that $i=2n-k-j\geq 2n-s$. Now let $[\alpha_1]\in H^i(M_1,\R)$, and let $[\alpha_2]\in H^j(M,\R)$. Since $(M_1,\omega_1)$ satisfies the $s$-Lefschetz property, and $(M_2,\omega_2)$ satisfies the Hard Lefschetz property,  we see that both $[\alpha_1]$ and $[\alpha_2]$ admit a harmonic representative on $M_1$ and $M_2$ respectively. By a), this proves that any cohomology class in $H^i(M_1,\R)\otimes H^j(M_2,\R)$ admits a harmonic representative. Since $i$ and $j$ are arbitrarily chosen, we conclude that any cohomology class in $H^{2n-k}(M,\R)$ has a harmonic representative. 
\end{proof}

 The following result will play an important role in our construction of simply-connected $K$-contact manifolds that do not admit any Sasakian structures.

\begin{theorem}(\cite[Prop. 5.2]{FMU07}) \label{example-weak-lef} Let $s \geq 2$ be an even integer. Then there is a simply-connected symplectic $(W_s,\sigma)$ of dimension $2(s+2)$ which is $s$-Lefschetz but not $(s+1)$-Lefschetz. Moreover, the symplectic form $\sigma$ is integral, and $b_{s+1}(W_s)=3$.
\end{theorem}

\begin{remark} By \cite[Theorem 4.2]{FMU07}, the symplectic form on  $M_s$ constructed in \cite[Prop. 5.1]{FMU07} can chosen to be integral. A careful reading of the proof of \cite[Prop.5.2]{FMU07} shows that the symplectic form on $W_s$ can also chosen to be integral; moreover, $b_{s+3}(W_s)=3$. Thus by the Poincar\'e dulaity, we have that $b_{s+1}(W_s)=3$.
\end{remark}

\begin{corollary}\label{example-weak-lef2} For any $n\geq 4$, there exists an $2n$ dimensional simply-connected compact symplectic manifold $(M,\omega)$, which is $2$-Lefschetz, and which satisfies the following properties.

\begin{itemize}
\item [a)] $[\omega]$ represents an integral cohomology class in $H^2(M,\Z)$;
\item [b)] $b_3(M)=3$.
\end{itemize} \end{corollary}

\begin{proof}  By Theorem \ref{example-weak-lef}, there exists an eight dimensional simply-connected symplectic manifold $(W,\sigma)$,  which is $2$-Lefschetz, and which has an integral symplectic form $\sigma$. Moreover, we have that $b_3(W)=3$. Now for any integer $n\geq 4$, let $(CP^{n-4},\omega_F)$ be the projective space equipped with a K\"ahler two form induced by the standard  Fubini-Study metric, and let  $(M,\omega)$ be the product symplectic manifold $(W\times CP^{n-4}, \sigma\times \omega_F)$.  Then by Lemma \ref{prod-s-lef}, $M$ is $2$-Lefschetz. Moreover, by construction $M$ is clearly simply-connected, and $\omega$ is integral.  An easy application of K\"unneth formula shows that $b_3(M)=3$. 
\end{proof}

Next we present a quick review of Boothby-Wang construction, and refer to \cite{B76} for more details.  A co-oriented contact structure on a $2n+1$ dimensional compact manifold $P$ is said to be regular if it is given as the kernel of a contact one form $\eta$, whose Reeb field $\xi$ generates a free effective $S^1$ action on $P$. Under this assumption,  $P$ is the total space of a principal circle bundle $ \pi: P\rightarrow M:=P/S^1$, and the base manifold $M$ is equipped with an integral symplectic form $\omega$ such that $\pi^* \omega =d\eta$.  Conversely, let $(M,\omega)$ be a compact symplectic manifold with an integral symplectic form $\omega$, and let $\pi:P\rightarrow M$ be the principal circle bundle over $M$ with Euler class $[\omega]$ and a connection one form $\eta$, such that $\omega=\pi^*d\eta$. Then $\eta$ is a contact one form on $P$ whose characteristic Reeb vector field generates the right translations of the structure group $S^1$ of this bundle.



\cite{Ha13} proves an useful result on when the total space of a Boothby-Wang fibration is simply-connected. Let $X$ be a compact and oriented manifold of dimension $m$. We say that $c \in H^2(X,\Z)$ is indivisible if the map
\[ c \cup : H^{m-2}(X,\Z)\rightarrow H^m(X,\Z)\] is surjective.

\begin{lemma} \label{boothby-wang-l1} (\cite[Lemma 16]{Ha13}) Let $\pi: P\rightarrow M$ be a Boothby-Wang fibration, and let $\omega$ be an integral symplectic form on $M$ which represents the Euler class of the Boothby-Wang fibration. Then $\pi_1(P)$ is simply-connected if and only if 
\begin{itemize}

\item [a)]$M$ is simply connected;

\item [b)] the Euler class $[\omega]$ is indivisible.
\end{itemize}
\end{lemma}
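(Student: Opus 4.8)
The natural route is the homotopy exact sequence of the circle bundle. Realize $\pi\colon P\to M$ as a principal $S^1$-bundle whose Euler class is $[\omega]$, and consider the fibration $S^1\to P\to M$. Its long exact homotopy sequence contains
\[
\pi_2(M)\xrightarrow{\ \partial\ }\pi_1(S^1)\xrightarrow{\ \iota_*\ }\pi_1(P)\xrightarrow{\ \pi_*\ }\pi_1(M)\to 1,
\]
where $\iota_*$ is induced by the fiber inclusion. For a principal bundle the subgroup $\iota_*(\pi_1(S^1))$ is central in $\pi_1(P)$, and $\pi_*$ identifies the quotient $\pi_1(P)/\iota_*(\pi_1(S^1))$ with $\pi_1(M)$. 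Since $\ker\iota_*=\operatorname{im}\partial$, the map $\iota_*$ is zero — equivalently $\pi_*\colon\pi_1(P)\to\pi_1(M)$ is an isomorphism — precisely when $\partial\colon\pi_2(M)\to\pi_1(S^1)\cong\Z$ is surjective. Thus the whole statement reduces to showing that indivisibility of $[\omega]$ forces $\partial$ to be onto.

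Next I would identify $\partial$ explicitly. Given $f\colon S^2\to M$ representing a class in $\pi_2(M)$, naturality of the homotopy sequence under the pullback bundle $f^*P\to S^2$ reduces the computation to circle bundles over $S^2$, which are classified by their Euler number; the Euler number of $f^*P$ is $\langle[\omega],f_*[S^2]\rangle$, and the connecting map of such a bundle is multiplication by that integer. Hence $\partial$ factors as the Hurewicz homomorphism $\pi_2(M)\to H_2(M;\Z)$ followed by the Kronecker pairing $\langle[\omega],-\rangle\colon H_2(M;\Z)\to\Z$.

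I would then feed in the hypothesis. By Poincar\'e duality the cup-product pairing $H^2(M;\Z)\times H^{m-2}(M;\Z)\to H^m(M;\Z)\cong\Z$ is unimodular on free parts, so surjectivity of $[\omega]\cup\colon H^{m-2}(M;\Z)\to H^m(M;\Z)$ — i.e. indivisibility — is equivalent to $[\omega]$ being a primitive vector of $H^2(M;\Z)/\mathrm{torsion}$, which in turn is equivalent to the homomorphism $\langle[\omega],-\rangle\colon H_2(M;\Z)\to\Z$ being surjective. Combined with the previous step, $\partial$ is surjective as soon as $[\omega]$ evaluates to a unit against a spherical homology class, and the desired isomorphism $\pi_1(P)\cong\pi_1(M)$ follows.

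The delicate point — and the step I expect to be the real obstacle — is the passage from $H_2(M;\Z)$ to its spherical subgroup: indivisibility only guarantees that $\langle[\omega],-\rangle$ is onto $H_2(M;\Z)$, whereas $\partial$ sees only the image of the Hurewicz map. When $M$ is simply connected — the case we actually use, since $P$ is built over Gompf's simply connected symplectic manifold of Theorem \ref{base-symp-mfld} — the Hurewicz theorem gives $\pi_2(M)\xrightarrow{\sim}H_2(M;\Z)$, so the two notions of surjectivity coincide and the argument closes cleanly, yielding in particular a simply connected total space. In the general setting one must check that the indivisible class $[\omega]$ can be evaluated to a unit on a \emph{spherical} class, so that surjectivity of $\langle[\omega],-\rangle$ survives restriction to $\operatorname{im}(\pi_2(M)\to H_2(M;\Z))$; this is exactly the place where the hypotheses (or the ambient simple connectivity) are genuinely used.
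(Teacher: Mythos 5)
The paper offers no proof of this lemma at all: it is quoted verbatim from Hamilton \cite[Lemma 15]{Ha13}, so there is no internal argument to compare yours against. Your homotopy-exact-sequence argument is the standard proof of such statements, and every step you actually carry out is correct: the reduction of $\pi_*\colon\pi_1(P)\to\pi_1(M)$ being an isomorphism to surjectivity of $\partial\colon\pi_2(M)\to\pi_1(S^1)$, the identification of $\partial$ (via pullback to circle bundles over $S^2$) as the Hurewicz map followed by $\langle[\omega],-\rangle$, and the Poincar\'e-duality translation of indivisibility (surjectivity of $[\omega]\cup\colon H^{m-2}(M,\Z)\to H^m(M,\Z)$) into surjectivity of $\langle[\omega],-\rangle\colon H_2(M,\Z)\to\Z$.

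The ``delicate point'' you flag at the end is not a loose end in your write-up; it is a genuine defect of the statement as quoted. With no hypothesis on $\pi_1(M)$ the lemma is false: the circle bundle of Euler number $1$ over $T^2$ (the Heisenberg nilmanifold) has indivisible Euler class, yet its fundamental group is the integer Heisenberg group rather than $\Z^2$ --- here $\pi_2(T^2)=0$, so $\partial$ cannot possibly be onto, exactly the failure mode you isolate. The same phenomenon occurs for the symplectic $T^4$. Hamilton's original lemma assumes the base is simply connected, in which case the Hurewicz theorem closes your argument precisely as you describe, and this covers the use of the lemma in Theorem \ref{Boyer-Galicki-question}, where the base $X=N\times CP^{n-3}$ is simply connected. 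Be aware, however, that the lemma is also invoked in the proof of Theorem \ref{main-construction} over a base with arbitrary fundamental group $G$, where the obstruction you identified is genuinely present: one would need $[\omega]$ to pair to a unit against a \emph{spherical} homology class, which indivisibility alone does not supply. So your proof is correct where it applies, and your caveat is not merely prudent --- it marks the exact boundary of validity of the statement.
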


We are ready to prove the main result of Section \ref{K-contact-examples}.

\begin{theorem}\label{high-dim-example-2} For any $n\geq 4$, there exists a  simply-connected compact $K$-contact manifold $P$ of dimension $2n+1$, such that $b_3(P)=3$. In particular, $P$ does not support any Sasakian structure.
\end{theorem}

\begin{proof} By Corollary \ref{example-weak-lef2}, for any $n\geq 4$, there exists a simply-connected compact symplectic $(M,\omega)$ of dimension $2n$ which is $2$-Lefschetz. Moreover, the symplectic form $\omega$ is integral, and $b_{3}(M)=3$. without loss of generality, we may assume that $[\omega]$ is not an integer multiple of another integral cohomology class. Then by the Poincar\'e duality over integer coefficients, $[\omega]$ is an indivisible integral cohomology class. 

 Let $(P,\eta)$ be the Boothby-Wang firbation over $(M,\omega)$ whose Chern class is $[\omega]$. By Lemma \ref{boothby-wang-l1}, $P$ is simply-connected.  Consider the following portion of the Gysin sequence for the principal circle bundle $\pi:P\rightarrow M$.
\begin{equation}\label{Gysin-2}  \cdots H^{1}(M,\R)  \xrightarrow{\wedge[\omega]} H^{3}(M,\R) \xrightarrow{\pi^*} H^{3}(P, \R)\xrightarrow{\pi_*} H^{2}(M,\R)\xrightarrow{\wedge[\omega]} H^{4}(M,\R)\xrightarrow{\pi^*} \cdots,\end{equation} where
$\pi_*:H^*(P,\R)\rightarrow H^{*-1}(M,\R)$ is the map induced by integration along the fibre.

Since $M$ is $2$-Lefschetz,  the map  $H^{2}(M,\R)\xrightarrow{\wedge[\omega]} H^{4}(M,\R)$ must be injective. Since $M$ is simply-connected, $H^1(M,\R)=0$. As a result,  $b_{3}(P)=b_{3}(M)=3$. It follows from \cite[Theorem 7.4.11]{BG08} that $M$ can not support any Sasakian structure.
 \end{proof}




\medskip

\noindent
Yi Lin \\
Department of Mathematical Sciences \\
Georgia Southern University\\
203 Georgia Ave., Statesboro, GA, 30460 \\
{\em E\--mail}: yilin@georgiasouthern.edu

\noindent
\noindent

\end{document}